\newtheorem{theorem}{Theorem}[section]
\newtheorem{corollary}{Corollary}[section]
\newtheorem{remark}{Remark}[section]
\newtheorem{example}{Example}[section]
\newtheorem{lemma}{Lemma}[section]
\newtheorem{conjecture}{Conjecture}[section]
\newtheorem{proposition}{Proposition}[section]
\newcommand*{\Zset}{\mathbb{Z}}  
\newcommand*{\Rset}{\mathbb{R}}  
\newcommand*{\Cset}{\mathbb{C}}  
\begin{document}
\title{{\Large \bf Studying the singularity of LCM-type matrices via semilattice structures and their Möbius functions}}
\date{March 20, 2014}
\author{{\sc Mika Mattila, Pentti Haukkanen}
\\{\sc and Jori Mäntysalo}
\\School of Information Sciences
\\FI-33014 University of Tampere, Finland
\\E-mail:  mika.mattila@uta.fi, pentti.haukkanen@uta.fi,\\ jori.mantysalo@uta.fi}
\maketitle
\setcounter{section}{0}
\setcounter{equation}{0}
\begin{abstract}
The invertibility of LCM matrices and their Hadamard powers have been studied a lot over the years by many authors. Bourque and Ligh conjectured in 1992 that the LCM matrix $[S]=[[x_i,x_j]]$ on any GCD closed set $S=\{x_1,x_2,\ldots,x_n\}$ is invertible, but in 1997 this was proven false. However, currently there are many open conjectures concerning LCM matrices and their Hadamard powers presented by Hong. In this paper we utilize lattice-theoretic structures and the Möbius function to explain the singularity of classical LCM matrices and their Hadamard powers. At the same time we end up disproving some of Hong's conjectures. Elementary mathematical analysis is applied to prove that for most semilattice structures there exist a set $S=\{x_1,x_2,\ldots,x_n\}$ of positive integers and a real number $\alpha>0$ such that $S$ possesses this structure and the power LCM matrix $[[x_i,x_j]^\alpha]$ is singular.
\end{abstract}
\medskip\noindent{\it Key words and phrases}:\\LCM matrix, GCD matrix, Smith determinant, Möbius function,\\ Meet semilattice, Singular number\\
\noindent{\it AMS Subject Classification:}\\11C20, 15B36

\section{Introduction}
\setcounter{equation}{0}

The study of GCD and LCM matrices was initiated by famous number theorist H. J. S. Smith \cite{S} in 1876. Smith calculated the determinant of the basic GCD matrix with the greatest common divisor of $i$ and $j$ as its $ij$-entry. In addition, Smith derived determinant formulas for more general GCD and LCM matrices with $(x_i,x_j)$ or $[x_i,x_j]$ as its $ij$-entry and showed that the GCD matrix $(S)$ and the LCM matrix $[S]$ are nonsingular on factor closed sets $S$. He also studied GCD and LCM matrices associated with arithmetical function $f$, where the $ij$ entries are $f((x_i,x_j))$ and $f([x_i,x_j])$, respectively. Determinants of GCD-related matrices were studied in dozens of papers during the 20th century (see e.g. the references in \cite{HWS}), but Bourque and Ligh \cite{BL} were the first ones to bring special attention to the invertibility properties of LCM matrices as they conjectured that the LCM matrix of a GCD closed set is always invertible. Shen \cite{Sh} went even further and conjectured that if the set $S$ is GCD closed and $r\neq0$, then the power LCM matrix $[[x_i,x_j]^r]$ is nonsingular.

Haukkanen et al. \cite{HWS} soon showed that the Bourque-Ligh conjecture (and also Shen's conjecture in the case $r=1$) is false by finding a counterexample with $9$ elements. Two years later Hong \cite{Ho} found a counterexample with $8$ elements and proved number-theoretically that the Bourque-Ligh conjecture holds for $n\leq7$ and does not hold in general for $n\geq8$ (there is also a recent paper by Korkee et al. \cite{KMH} which gives another, a lattice-theoretic proof for this fact). Subsequently Hong published many papers regarding power GCD and power LCM matrices (see e.g. \cite{Ho2, Ho3, Ho4}). Hong also ended up presenting several conjectures on his own about the nonsingularity of power GCD and power LCM matrices. For example, in \cite{Ho2} Hong conjectured that if $S$ is a GCD closed set of odd integers, then every power LCM matrix of the set $S$ with nonzero exponent is nonsingular.

During the last ten years there has not been much progress on proving or disproving Hong's conjectures, and they all have remained open. One of the few advances was Li's article \cite{Li}, which provided some support to two of the conjectures. In this article we improve this situation by showing that some of Hong's conjectures are in fact false. This is done by using lattice-theoretic methods.

In Section 2 we introduce some key definitions and preliminary results needed in the following sections. In Section 3 we study the zeros of the Möbius function in a given meet semilattice, which gives us the leverage to analyze the product expression of the determinant of LCM-type matrices. In Section 4 we apply the mathematics software Sage \cite{Stein} to show that every $8$-element GCD closed set $S$, for which the LCM matrix $[S]$ is singular, has the same semilattice structure. We also construct a GCD closed set $S$ of odd numbers such that the LCM matrix $[S]$ is singular. In Section 5 we prove that for most semilattice structures $(L,\preceq)$ there exists a set $S=\{x_1,x_2,\ldots,x_n\}$ of positive integers and a positive real number $\alpha$ such that $(S,|)\cong(L,\preceq)$ and the power LCM matrix $[S]_{N^\alpha}:=[[x_i,x_j]^\alpha]$ is singular. We also point out a connection between the $\wedge$-tree structure of $(L,\preceq)$, the nonpositiveness of the nontrivial values of the Möbius function $\mu_L$ and the nonsingularity of the power LCM matrices $[S]_{N^\alpha}$ for all $(S,|)\cong(L,\preceq)$ and $\alpha>0$. In Section 6 we shall encounter several conjectures by Hong and give conclusive answers to some of them.

\section{Preliminaries}

If $(P,\preceq)$ is a meet semilattice, $f$ is a function $P\to\Cset$ and $S=\{x_1,\ldots,x_n\}$ is a subset of $P$ with distinct elements arranged so that $x_i\preceq x_j\Rightarrow i\leq j$, then the \emph{meet matrix of the set $S$ with respect to the function $f$} has $f(x_i\wedge x_j)$ as its $ij$-entry. This matrix is usually denoted by $(S)_f$. Similarly, if $(P,\preceq)$ is a join semilattice and $f$ and $S$ are as above, then the \emph{join matrix of the set $S$ with respect to the function $f$} has $f(x_i\vee x_j)$ as its $ij$-entry. For this join matrix we use the notation $[S]_f$.

In the special case when $(P,\preceq)=(\Zset_+,|)$ and $f$ is an arithmetical function the meet and join matrices become the so-called \emph{GCD and LCM matrices with respect to the arithmetical function} $f$, respectively. Moreover, if we set $f=N^\alpha$, where $N^\alpha(m)=m^\alpha$ for all $m\in \Zset_+$, the matrices $(S)_f$ and $[S]_f$ become the power-GCD and power-LCM matrices with $(x_i,x_j)^\alpha$ and $[x_i,x_j]^\alpha$ as their $ij$-entries, respectively. And in the case when $\alpha=1$ we denote $N^1=N$ and obtain the usual GCD and LCM matrices with $(x_i,x_j)$ and $[x_i,x_j]$ as their $ij$-entries, respectively. The usual GCD matrix of the set $S$ is denoted by $(S)$, and the usual LCM matrix by $[S]$.

\begin{remark}
It is often convenient to assume that $x_i\preceq x_j\Rightarrow i\leq j$ (in the case of meet and join matrices) or that $x_1\leq x_2\leq\cdots\leq x_n$ (in the case of GCD and LCM matrices). However, the indexing of the elements of the set $S$ does not affect on the invertibility of the corresponding meet or join matrix, see e.g. \cite[Remark 2.1]{MH14}. Since in this paper we are only interested in the singularity or nonsingularity of these matrices, in most of the cases we could also do without this assumption. 
\end{remark}

We develop further the lattice-theoretic method adopted in \cite{KMH}. This time we will focus solely on power-LCM and power-GCD matrices. Throughout this paper, let $S=\{x_1,\ldots,x_n\}$ be a GCD closed set of positive integers. By denoting $S_i=\{x_1,\ldots,x_i\}$ we obtain a chain of GCD closed sets $S_1\subset S_2\subset\cdots\subset S_n=S$. It should be noted that every set $S_i$ is also trivially lower closed in $(S,|)$. This observation enables us to use the Möbius function $\mu_S$ of the set $S$, which can be given recursively as
\begin{align*}
&\mu_S(x_i,x_i)=1,\\
&\mu_S(x_i,x_j)=-\sum_{\substack{x_i\,|\,x_k\,|\,x_j\\x_k\neq x_i}}\mu_S(x_i,x_k)=-\sum_{\substack{x_i\,|\,x_k\,|\,x_j\\ x_k\neq x_j}}\mu_S(x_k,x_j).
\end{align*}

Since
\[
[S]_{N^\alpha}=\mathrm{diag}(x_1^\alpha,\ldots,x_n^\alpha)(S)_{\frac{1}{N^\alpha}}\mathrm{diag}(x_1^\alpha,\ldots,x_n^\alpha),
\]
it follows that $[S]_{N^\alpha}$ is singular if and only if $(S)_{\frac{1}{N^\alpha}}$ is singular. Further, since the set $S$ is GCD closed, we may define the function $\Psi_{S,\frac{1}{N^\alpha}}$ on $S$ as
\begin{equation}\label{eq:psi}
\Psi_{S,\frac{1}{N^\alpha}}(x_i)=\sum_{x_k\,|\,x_i}\frac{\mu_S(x_k,x_i)}{x_k^\alpha}
\end{equation}
(if the set $S$ was not GCD closed we would have to define this function on an auxiliary set $D$ such that $(x_i,x_j)\in D$ for all $x_i,x_j\in S$, as done in \cite{ATH}). By a well-known determinant formula (see e.g. \cite[Theorem 4.2]{ATH}) we now have
\begin{equation}\label{eq:det}
\det (S)_{\frac{1}{N^\alpha}}=\Psi_{S,\frac{1}{N^\alpha}}(x_1)\Psi_{S,\frac{1}{N^\alpha}}(x_2)\cdots\Psi_{S,\frac{1}{N^\alpha}}(x_n).
\end{equation}
Thus we may conclude the following result.

\begin{proposition}\label{psi-lause}
The matrices $[S]_{N^\alpha}$ and $(S)_{\frac{1}{N^\alpha}}$ are both invertible if and only if $\Psi_{S,\frac{1}{N^\alpha}}(x_i)\neq0$ for all $i=1,\ldots,n$.
\end{proposition}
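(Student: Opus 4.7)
The plan is to chain together the two displayed identities that immediately precede the statement, and then invoke the basic fact that an invertible matrix is one with nonzero determinant. Essentially no new content is needed; the work has already been packaged by the author into the two equations displayed above the proposition, so the role of the proof is just to verify that they combine correctly.

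First I would handle the equivalence between the invertibility of $[S]_{N^\alpha}$ and that of $(S)_{\frac{1}{N^\alpha}}$. The identity
\[
[S]_{N^\alpha}=\mathrm{diag}(x_1^\alpha,\ldots,x_n^\alpha)\,(S)_{\frac{1}{N^\alpha}}\,\mathrm{diag}(x_1^\alpha,\ldots,x_n^\alpha)
\]
shows that $[S]_{N^\alpha}$ is obtained from $(S)_{\frac{1}{N^\alpha}}$ by congruence with an invertible diagonal matrix (invertible because each $x_i$ is a positive integer, so $x_i^\alpha>0$). Taking determinants gives $\det[S]_{N^\alpha}=(x_1\cdots x_n)^{2\alpha}\det(S)_{\frac{1}{N^\alpha}}$, so one is nonzero precisely when the other is.

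Next I would invoke the product formula \eqref{eq:det}, which expresses $\det(S)_{\frac{1}{N^\alpha}}$ as $\prod_{i=1}^n \Psi_{S,\frac{1}{N^\alpha}}(x_i)$. This formula is legitimate here because $S$ is assumed GCD closed, so the function $\Psi_{S,\frac{1}{N^\alpha}}$ in \eqref{eq:psi} is well defined on $S$ itself (no auxiliary extension set is required). A product of real numbers is nonzero if and only if every factor is nonzero, so combining this with the first step yields the stated equivalence in both directions.

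There is no real obstacle: the proof is a two-line assembly of results already cited. The only thing worth being explicit about is that the positivity $x_i^\alpha>0$ is what makes the diagonal congruence harmless, and that the GCD-closedness of $S$ is precisely what justifies applying \eqref{eq:det} directly to $S$ rather than to an enlarged meet-closure.
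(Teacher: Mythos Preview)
Your proposal is correct and matches the paper's approach exactly: the paper does not give a separate proof but simply writes ``Thus we may conclude the following result'' after displaying the diagonal factorization and the determinant formula \eqref{eq:det}, and your write-up just makes explicit the two-step chaining (congruence by an invertible diagonal matrix, then the product formula) that this sentence is summarizing.
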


\begin{remark}
Proposition \ref{psi-lause} shows that the Möbius function plays a crucial role in invertibility of power LCM and GCD matrices of GCD closed sets. For material on the Möbius function we refer to \cite{A, R, St}.
\end{remark}

\begin{remark}
By applying the other equation in \cite[Theorem 4.2]{ATH} we obtain
\[
\Psi_{S,\frac{1}{N^\alpha}}(x_i)=\sum_{\substack{z\,|\,x_i\\z\,\nmid\,x_j\ \mathrm{for}\ j<i}}\sum_{w\,|\,z}\frac{1}{w^\alpha}\mu\left(\frac{z}{w}\right)=\sum_{\substack{z\,|\,x_i\\z\,\nmid\,x_j\ \mathrm{for}\ j<i}}\left(\frac{1}{N^\alpha}*\mu\right)(z),
\]
where $\mu$ is the number-theoretic Möbius function and $*$ is the Dirichlet convolution. Therefore $\Psi_{S,\frac{1}{N^\alpha}}(x_i)$ is equal to $\alpha_i$ (or $\alpha_i(x_1,\ldots,x_k)$), which appears in many texts by Bourque and Ligh and Hong (see e.g. \cite{BL} and \cite{Ho3}), but here we only use a different method for calculating it.   
\end{remark}

Finally we need the following proposition.

\begin{proposition}\label{aputulos}
Let $T=\{t_1,\ldots,t_m\}$ be any subset of $S$ with $t_i\,|\,t_j\Rightarrow i\leq j$. If the poset $(T,|)$ belongs to one of the classes presented in Figure \ref{fig:aputulos}, then
\[
\sum_{i=1}^m\frac{a_i}{t_i}>0,
\]
where the $a_i$'s are the coefficients found in Figure \ref{fig:aputulos} next to each element.
\end{proposition}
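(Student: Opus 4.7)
The plan is to proceed by case analysis over the finitely many poset shapes appearing in Figure \ref{fig:aputulos}. For each shape the coefficients $a_i$ are fixed integers read off the figure, so the claim reduces to showing that an explicit rational expression in $t_1,\ldots,t_m$ is strictly positive whenever the $t_i$ satisfy the divisibility relations dictated by the diagram.

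The main tool is the elementary observation that if $t_i\,|\,t_j$ with $t_i\neq t_j$ then $t_j/t_i$ is an integer $\geq 2$, and more generally $t_j\geq 2^k t_i$ if there are $k$ strict divisibility steps from $t_i$ to $t_j$ in the Hasse diagram of $(T,|)$. This lets me bound each term with a negative coefficient in $\sum a_i/t_i$ from above using terms indexed by elements lying strictly below it. After factoring out the common factor $1/t_1$ (which does not affect the sign), the problem becomes a purely combinatorial inequality in the ratios $q_i := t_i/t_1$, which are positive integers subject to the divisibility constraints of the diagram. In favourable cases the inequality follows by pairing each negative term with a dominating positive term via a single divisibility step; in chain-like shapes one can telescope the expression along the chain and reduce to verifying $\sum_k b_k/2^k>0$ for the worst case $t_{i+1}=2t_i$.

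The main obstacle is the shapes with a wide middle level in which several negative coefficients compete against only a few positive ones, so that no single-term pairing works. For those I would group the terms along the maximal chains of the Hasse diagram, assigning the positive contribution of each minimal element to the chains above it with weights chosen so that each chain has nonnegative sum after the distribution and at least one chain has strictly positive sum. Strict positivity in the last step comes from the fact that the divisibility inequalities used are strict whenever the corresponding ratio actually attains the extremal value $2$, and at least one inequality must fail to be saturated because the overall minimum configuration (if it existed) would force incompatible equalities among the $q_i$. Once each shape in the figure is handled in this manner, the proposition follows.
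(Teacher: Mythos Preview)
Your general direction (case split, use $t_i\mid t_j\Rightarrow t_j\ge 2t_i$, factor out $1/t_1$) matches the paper, but you have misread the difficulty of the figure and built machinery the problem does not need. All the shapes ${\cal S}_{2,4},{\cal S}_{3,5},\ldots,{\cal S}_{m-2,m}$ are \emph{the same} diamond pattern: one bottom element $t_1$, $m-2$ pairwise incomparable middle elements $t_2,\ldots,t_{m-1}$ each divisible by $t_1$, and one top element $t_m$ with coefficient~$+1$. There are no long chains to telescope and no ``wide'' cases requiring a weighted distribution along maximal chains. The paper disposes of all of these at once with the single line
\[
\frac{m-3}{t_1}+\frac{1}{t_m}-\sum_{k=2}^{m-1}\frac{1}{t_k}
=\frac{1}{t_m}+\frac{1}{t_1}\Bigl((m-3)-\sum_{k=2}^{m-1}\frac{t_1}{t_k}\Bigr),
\qquad
\sum_{k=2}^{m-1}\frac{t_1}{t_k}\le\frac{m-2}{2}\le m-3\quad(m\ge4),
\]
so the bracket is nonnegative and the $1/t_m$ term alone gives strict positivity.

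This last point is where your sketch is actually shaky rather than merely overcomplicated. Your strictness argument relies on ``the overall minimum configuration would force incompatible equalities among the $q_i$''. That is unnecessary and, as stated, not obviously valid: you would have to argue carefully that all ratios cannot simultaneously equal $2$ (distinctness of the $t_k$ does give this, but you never invoke it). The paper avoids the issue entirely because the top element contributes $+1/t_m>0$, and everything else is already $\ge 0$. So: keep the factoring-out step, drop the chain/weight apparatus, and observe that the single bound $t_1/t_k\le 1/2$ summed over the middle level already finishes the job.
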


\begin{proof}
Consider first Figure \ref{fig:aputulos}(a). Now $(T,|)\in{\cal S}_{1,2}$ and $t_1\,|\,t_2$, and thus clearly $\frac{1}{t_1}-\frac{1}{t_2}>0$. Consider next Figures \ref{fig:aputulos}(b)-\ref{fig:aputulos}(e). Then $m\geq4$, $t_1\,|\,t_2,\ldots,t_{m-1}$ and $t_2,\ldots,t_{m-1}\,|\,t_m$. In this case
\[
\frac{m-3}{t_1}+\frac{1}{t_m}-\sum_{k=2}^{m-1}\frac{1}{t_k}=\frac{1}{t_m}+\frac{1}{t_1}\left((m-3)-\sum_{k=2}^{m-1}\frac{t_1}{t_k}\right)>0,
\]
since
\[
\sum_{k=2}^{m-1}\frac{t_1}{t_k}<\sum_{k=2}^{m-1}\frac{1}{2}=\frac{m-2}{2}\leq m-3.
\]
\end{proof}

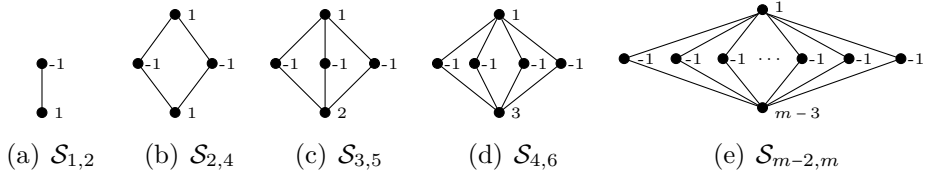
\begin{figure}[htb!]
\centering
\subfigure[${\cal S}_{1,2}$]
{{\scalefont{0.5}
\begin{tikzpicture}[scale=0.65]
\draw (0,1)--(0,1.9);
\draw [fill] (0,1) circle [radius=0.1];
\draw [fill] (0,2) circle [radius=0.1];
\node [right] at (0.1,1) {$1$};
\node [right] at (0,2) {\textrm{-}$1$};
\node [right] at (1,2) {\ };
\node [right] at (-1,2) {\ };
\end{tikzpicture}}
}
\subfigure[${\cal S}_{2,4}$]
{{\scalefont{0.5}
\begin{tikzpicture}[scale=0.65]
\draw (1,0)--(0.25,1)--(0.92,1.92);
\draw (1,0)--(1.75,1)--(1.08,1.92);
\draw [fill] (0.25,1) circle [radius=0.1];
\draw [fill] (1.75,1) circle [radius=0.1];
\draw [fill] (1,0) circle [radius=0.1];
\draw [fill] (1,2) circle [radius=0.1];
\node [right] at (1.1,0) {$1$};
\node [right] at (0.25,1) {\textrm{-}$1$};
\node [right] at (1.75,1) {\textrm{-}$1$};
\node [right] at (1.1,2) {$1$};
\end{tikzpicture}}
}
\subfigure[${\cal S}_{3,5}$]
{{\scalefont{0.5}
\begin{tikzpicture}[scale=0.65]
\draw (1,0)--(0,1)--(0.92,1.92);
\draw (1,0)--(1,1.9);
\draw (1,0)--(2,1)--(1.08,1.92);
\draw [fill] (0,1) circle [radius=0.1];
\draw [fill] (1,1) circle [radius=0.1];
\draw [fill] (2,1) circle [radius=0.1];
\draw [fill] (1,0) circle [radius=0.1];
\draw [fill] (1,2) circle [radius=0.1];
\node [right] at (1.1,0) {$2$};
\node [right] at (0,1) {\textrm{-}$1$};
\node [right] at (1,1) {\textrm{-}$1$};
\node [right] at (2,1) {\textrm{-}$1$};
\node [right] at (1.1,2) {$1$};
\end{tikzpicture}}
}
\subfigure[${\cal S}_{4,6}$]
{{\scalefont{0.5}
\begin{tikzpicture}[scale=0.65]
\draw (1,0)--(-0.25,1)--(0.92,1.92);
\draw (1,0)--(0.5,1)--(0.95,1.9);
\draw (1,0)--(1.5,1)--(1.05,1.9);
\draw (1,0)--(2.25,1)--(1.08,1.92);
\draw [fill] (-0.25,1) circle [radius=0.1];
\draw [fill] (0.5,1) circle [radius=0.1];
\draw [fill] (1.5,1) circle [radius=0.1];
\draw [fill] (1,0) circle [radius=0.1];
\draw [fill] (2.25,1) circle [radius=0.1];
\draw [fill] (1,2) circle [radius=0.1];
\node [right] at (1.1,0) {$3$};
\node [right] at (-0.25,1) {\textrm{-}$1$};
\node [right] at (1.5,1) {\textrm{-}$1$};
\node [right] at (0.5,1) {\textrm{-}$1$};
\node [right] at (2.25,1) {\textrm{-}$1$};
\node [right] at (1.1,2) {$1$};
\end{tikzpicture}}
}
\subfigure[${\cal S}_{m-2,m}$]
{{\scalefont{0.5}
\begin{tikzpicture}[scale=0.65]
\draw (1,0)--(-1.8,1)--(0.92,1.92);
\draw (1,0)--(-0.75,1)--(0.92,1.92);
\draw (1,0)--(0.2,1)--(0.92,1.92);
\draw (1,0)--(1.8,1)--(1.08,1.92);
\draw (1,0)--(2.75,1)--(1.08,1.92);
\draw (1,0)--(3.8,1)--(1.08,1.92);
\draw [fill] (-1.8,1) circle [radius=0.1];
\draw [fill] (3.8,1) circle [radius=0.1];
\draw [fill] (0.2,1) circle [radius=0.1];
\draw [fill] (1.8,1) circle [radius=0.1];
\draw [fill] (1,0) circle [radius=0.1];
\draw [fill] (2.75,1) circle [radius=0.1];
\draw [fill] (-0.75,1) circle [radius=0.1];
\draw [fill] (1,2) circle [radius=0.1];
\node [right] at (1.1,-0.1) {$m-3$};
\node [right] at (0.2,1) {\textrm{-}$1$};
\node at (1,1) {\ \ $\ldots$};
\node [right] at (-0.7,1) {\textrm{-}$1$};
\node [right] at (2.75,1) {\textrm{-}$1$};
\node [right] at (1.8,1) {\textrm{-}$1$};
\node [right] at (1.1,2.05) {$1$};
\node [right] at (-1.8,1) {\ \textrm{-}$1$};
\node [right] at (3.8,1) {\textrm{-}$1$};
\end{tikzpicture}}
}
\caption{The lattice classes of Proposition \ref{aputulos}. The coefficients $a_i$ are next to each element.}\label{fig:aputulos}
\end{figure}

\section{On the zeros of the Möbius function of a meet semilattice}

Before we can begin our study of singular LCM matrices we need to prove the following lemma. It is going to tell us something important about the zeros of the Möbius function of a finite meet semilattice. 

\begin{lemma}\label{Möbius0}
Let $(L,\preceq)$ be a finite meet semilattice, $x\in L$ and $C_L(x)=\{y\in L\ |\ x\text{ covers }y\}$. Denote $\xi_L(x)=\bigwedge C_L(x)$ if $C_L(x)\neq\emptyset$ and $\xi_L(x)=x$ if $C_L(x)=\emptyset$. If $$z\not\in\boldsymbol{\lsem}\xi_L(x),x\boldsymbol{\rsem}:=\{w\in L\ |\ \xi_L(x)\preceq w\preceq x\},$$ then $\mu_L(z,x)=0.$ 
\end{lemma}
\begin{proof}
If $C_L(x)=\emptyset$, then $x=\min L$ and we have $\xi_L(x)=x$. Trivially $\mu_L(z,x)=0$ for all $z\not\in\boldsymbol{\lsem}x,x\boldsymbol{\rsem}$, so we may assume that $C_L(x)\neq\emptyset$. Let $m$ denote the number of elements in $C_L(x)$ ($m\geq1$). Suppose that $z\not\in\boldsymbol{\lsem}\xi_L(x),x\boldsymbol{\rsem}$. Clearly the claim is true if $z\not\preceq x$, so we may assume that $z\preceq x$. Let $\eta(z)$ denote the number of elements $y_i\in C_L(x)$ such that $z\prec y_i$, and let $y_1,y_2,\ldots,y_{\eta(z)}$ be these elements (thus $C_{\boldsymbol{\lsem}z,x\boldsymbol{\rsem}}(x)=\{y_1,y_2,\ldots,y_{\eta(z)}\}$). In addition, $\xi_{\boldsymbol{\lsem}z,x\boldsymbol{\rsem}}(x)=y_1\wedge y_2\wedge\cdots\wedge y_{\eta(z)}$ (clearly $\xi_{\boldsymbol{\lsem}z,x\boldsymbol{\rsem}}(x)\in\boldsymbol{\lsem}\xi_L(x),x\boldsymbol{\rsem}$). We apply double induction: first induction on the size of $C_L(x)$ and then induction on the size of the interval $\boldsymbol{\lsem}z,\xi_{\boldsymbol{\lsem}z,x\boldsymbol{\rsem}}(x)\boldsymbol{\lsem}$.

Our base case is that the set $C_L(x)$ has one element, i.e. $m=1$. Suppose first that there is only one element on the interval $\boldsymbol{\lsem}z,\xi_{\boldsymbol{\lsem}z,x\boldsymbol{\rsem}}(x)\boldsymbol{\lsem}=\boldsymbol{\lsem}z,y_1\boldsymbol{\lsem}$. This element has to be $z$ itself. In this case the interval $\boldsymbol{\lsem}z,x\boldsymbol{\rsem}$ is equal to the chain $z\prec y_1\prec x$, and clearly 
\[
\mu_L(z,x)=-\sum_{z\prec v\preceq x}\mu_L(v,x)=-(\mu_L(y_1,x)+\mu_L(x,x))=-(-1+1)=0.
\]
Next we consider the case $m=1$ and there are more than one elements on the interval $\boldsymbol{\lsem}z,\xi_{\boldsymbol{\lsem}z,x\boldsymbol{\rsem}}(x)\boldsymbol{\lsem}=\boldsymbol{\lsem}z,y_1\boldsymbol{\lsem}=\boldsymbol{\lsem}z,\xi_L(x)\boldsymbol{\lsem}$. Here our secondary induction hypothesis is that if $u\not\in\boldsymbol{\lsem}\xi_L(x),x\boldsymbol{\lsem}$, $m=1$ and there are less than $k(\geq2)$ elements on the interval $\boldsymbol{\lsem}u,\xi_L(x)\boldsymbol{\lsem}$, then $\mu_L(u,x)=0$. Suppose that there are $k$ elements on the interval $\boldsymbol{\lsem}z,\xi_L(x)\boldsymbol{\lsem}$. Since in this case $\boldsymbol{\rsem}z,x\boldsymbol{\rsem}=\boldsymbol{\rsem}z,\xi_L(x)\boldsymbol{\lsem}\cup\boldsymbol{\lsem}\xi_L(x),x\boldsymbol{\rsem}$, we have
\[
\mu_L(z,x)=-\left(\sum_{z\prec v\prec \xi_{\boldsymbol{\lsem}z,x\boldsymbol{\rsem}}(x)}\overbrace{\mu_L(v,x)}^{\substack{=0\text{ by induction}\\\text{hypothesis}}}\right)-\left(\overbrace{ \sum_{\xi_{\boldsymbol{\lsem}z,x\boldsymbol{\rsem}}(x)\preceq v\preceq x}\mu_L(v,x)}^{\substack{ =\delta_L(\xi_{\boldsymbol{\lsem}z,x\boldsymbol{\rsem}}(x),x)=0,\\\text{ since }\xi_{\boldsymbol{\lsem}z,x\boldsymbol{\rsem}}(x)\neq x}}\right)=-0-0=0.
\]
Thus our base case is complete.

Now let $m>1$. Our primary induction hypothesis is that for all semilattices $L$ in which $x$ covers less than $m$ elements we have $\mu_L(u,x)=0$ for all $u\not\in\boldsymbol{\lsem}\xi_L(x),x\boldsymbol{\rsem}$.

Suppose first that $z\not\in\boldsymbol{\lsem}\xi_L(x),x\boldsymbol{\rsem}$ is fixed and $\eta(z)<m$. When calculating the value $\mu_L(z,x)$ we may restrict ourselves to the meet semilattice $\boldsymbol{\lsem}z,x\boldsymbol{\rsem}$. In this structure $z$ precedes and $x$ covers less than $m$ of the elements of $C_L(x)$. Thus our induction hypothesis implies that $\mu_L(z,x)=\mu_{\boldsymbol{\lsem}z,x\boldsymbol{\rsem}}(z,x)$ can be nonzero only if $z\in\boldsymbol{\lsem}\xi_{\boldsymbol{\lsem}z,x\boldsymbol{\rsem}}(x),x\boldsymbol{\rsem}\subset\boldsymbol{\lsem}\xi_L(x),x\boldsymbol{\rsem}$. Thus the claim is true for all $z$ with $\eta(z)<m$.
 
Suppose then that $z\not\in\boldsymbol{\lsem}\xi_L(x),x\boldsymbol{\rsem}$ and $\eta(z)=m$. We aim to prove that $\mu_L(z,x)=0$ by applying the formula
\[
\mu_L(z,x)=-\sum_{z\prec v\preceq x}\mu_L(v,x).
\]
Since in this case $z$ is a lower bound for all the elements $y_i$, we must have $z\preceq \xi_L(x)$. When calculating the value $\mu_L(z,x)$ we may omit all elements $v\succ z$ such that $\mu_L(v,x)=0$. If $\eta(v)<m$, then by the work done above we know that all the elements $v$ with nonzero Möbius function value are located on the interval $\boldsymbol{\lsem}\xi_{\boldsymbol{\lsem}v,x\boldsymbol{\rsem}}(x),x\boldsymbol{\rsem}\subseteq\boldsymbol{\lsem}\xi_L(x),x\boldsymbol{\rsem}$. All the remaining elements $v\succ z$ have $\eta(v)=m$, and therefore $v\in\boldsymbol{\rsem}z,\xi_L(x)\boldsymbol{\rsem}$. Thus
\[
\mu_L(z,x)=-\left(\sum_{z\prec v\preceq x}\mu_L(v,x)\right)=-\left(\sum_{z\prec v\prec \xi_L(x)}\mu_L(v,x)\right)-\left(\sum_{\xi_L(x)\preceq v\preceq x}\mu_L(v,x)\right).
\]
Next we use induction on the size of the interval $\boldsymbol{\lsem}z,\xi_L(x)\boldsymbol{\lsem}$ and show that $\mu_L(z,x)=0$. If there is only one element on this interval, then
\[
\mu_L(z,x)=-\left(\sum_{z\prec v\preceq x}\mu_L(v,x)\right)=-\left(\sum_{\xi_L(x)\preceq v\preceq x}\mu_L(v,x)\right)=\delta_L(\xi_L(x),x)=0,
\]
since $\xi_L(x)\neq x$. Our induction hypothesis is that if there are less than $k(\geq2)$ elements on the interval $\boldsymbol{\lsem}v,\xi_L(x)\boldsymbol{\lsem}$, then $\mu_L(v,x)=0$. Suppose that there are $k$ elements on the interval $\boldsymbol{\lsem}z,\xi_L(x)\boldsymbol{\lsem}$. Now
\[
\mu_L(z,x)=-\left(\sum_{z\prec v\prec \xi_{\boldsymbol{\lsem}z,x\boldsymbol{\rsem}}(x)}\overbrace{\mu_L(v,x)}^{\substack{=0\text{ by induction}\\\text{hypothesis}}}\right)-\left(\overbrace{\sum_{\xi_{\boldsymbol{\lsem}z,x\boldsymbol{\rsem}}(x)\preceq u\preceq x}\mu_L(u,x)}^{=\delta_L(\xi_L(x),x)=0}\right)=-0-0=0.
\]
Thus our proof is complete.
\end{proof}

\begin{remark}\label{re:möbius0}
It is also possible to prove a stronger version of Lemma \ref{Möbius0}. Suppose that $\mu_L(z,x)\neq0$. By using the same notations as in the proof of Lemma \ref{Möbius0} we trivially have $z\preceq \xi_{\boldsymbol{\lsem}z,x\boldsymbol{\rsem}}(x)$. On the other hand, Lemma \ref{Möbius0} implies that $\xi_{\boldsymbol{\lsem}z,x\boldsymbol{\rsem}}(x)\preceq z\preceq x$. Thus we must have $\xi_{\boldsymbol{\lsem}z,x\boldsymbol{\rsem}}(x)=z$. In other words, if $\mu_L(z,x)\neq0$, then $z$ has to be the meet of all those elements $y_i$, which are covered by $x$ and located on the interval $\boldsymbol{\lsem}z,x\boldsymbol{\rsem}$. However, for our purposes the original formation is mostly sufficient, but in the proof of Theorem \ref{th:yksikäsitteisyyslause} this stronger version is needed as well.
\end{remark}

\section{Singularity of the usual LCM matrices}
\setcounter{subfigure}{0}
It has been known for long that the smallest GCD closed set $S$, for which the LCM matrix $[S]$ is singular, has $8$ elements. However, the uniqueness of the structure of such set has not been considered earlier. The next theorem answers to this question.

\begin{theorem}\label{th:yksikäsitteisyyslause}
If $S$ is a GCD closed set with $8$ elements and the LCM matrix $[S]=[[x_i,x_j]]$ is singular, then the semilattice $(S,|)$ always belongs to the class $8_{\rm J}$ in Figure \ref{fig:luokat}.
\end{theorem}

\begin{proof}
Suppose that $S$ is a GCD closed set with $8$ elements and its LCM matrix $[S]$ is singular. Thus $S_1=\{x_1\}, S_2=\{x_1,x_2\},\ldots,S_8=S$. Since $S_1,\ldots,S_7$ are all meet semilattices with less than $8$ elements and all LCM matrices are invertible up to the size $7\times7$, we know that $\Psi_{S_i,\frac{1}{N}}(x_i)=\Psi_{S,\frac{1}{N}}(x_i)\neq0$ for all $i=1,\ldots,7$. Since the matrix $[S]$ is yet singular, we must have $\Psi_{S,\frac{1}{N}}(x_8)=0$.

Next we should note that the last added element $x_8$ must cover at least three elements. Otherwise Remark \ref{re:möbius0} would imply that the set of all elements $x_i\in S$ with $\mu_S(x_i,x_8)\neq0$ belongs to either of the classes ${\cal S}_{1,2}$ or ${\cal S}_{2,4}$ in Figure \ref{fig:aputulos}. In the first case we have $\Psi_{S,\frac{1}{N}}(x_8)=\frac{1}{x_8}-\frac{1}{x_k}<0$, where $x_k$ is the element covered by $x_8$. In the second case $\Psi_{S,\frac{1}{N}}(x_8)>0$ by Proposition \ref{aputulos}. Furthermore, from this we deduce that in the Hasse diagram of $(S,|)$ every maximal element has to cover at least three elements. If this is not the case and there is a maximal element that covers at most two elements, then the set $S$ can be constructed so that $x_8$ is this element. As above we obtain that $\Psi_{S,\frac{1}{N}}(x_8)\neq0$, which is a contradiction.

There are $1078$ meet semilattices with $8$ elements, but the condition that every maximal element need to cover at least three elements reduces the number of possibilities to $84$ (Remark \ref{re:sage} contains the details on how the desired list of meet semilattices is obtained). By taking into account the possible zeros of the Möbius function $\mu_S$ we are able to rule out even more structures, namely those for which there exists $x_i\in S$ such that $\mu_S(x_i,x_8)=0$, $x_i$ covers at most one element and is covered by exactly one. Suppose for a contradiction that there exists such element $x_i$ in $S$. Then $S\setminus\{x_i\}$ is a meet semilattice with 7 elements (the ordering of $S\setminus\{x_i\}$ is induced by the ordering of $S$), $\mu_{S\setminus\{x_i\}}(x_k,x_8)=\mu_S(x_k,x_8)$ for all $x_k\in S\setminus\{x_i\}$ and therefore
\[
\Psi_{S,\frac{1}{N}}(x_8)=\Psi_{S\setminus\{x_i\},\frac{1}{N}}(x_8)\neq0.
\]
Again this means that the matrix $[S]$ is invertible, which is a contradiction. Thus $S$ cannot contain this type of element $x_i$. This leaves us with the ten possible structures $8_{\rm A},\ldots,8_{\rm J}$ presented in Figure \ref{fig:luokat}. By the work by Hong \cite{Ho} we already know that $S$ may belong to the class $8_{\rm J}$. We only need to show that $S$ cannot be any of the remaining types $8_{\rm A},8_{\rm B},\ldots,8_{\rm I}$. It suffices to prove that $\Psi_{S,\frac{1}{N}}(x_8)\neq0$ whenever $(S,|)$ belongs to any of these classes. From Proposition \ref{aputulos} we obtain directly that $\Psi_{S,\frac{1}{N}}(x_8)>0$ when $S\in 8_{\rm I}$. In order to revert also the remaining cases to this same proposition we first need to divide the set $S$ into suitable blocks. Figure \ref{fig:valinnat} shows the indexing of the elements of $S$ in each case.
\begin{figure}[ht!]
\centering
\subfigure[$8_{\rm A}$]
{{\scalefont{0.5}
\begin{tikzpicture}[scale=0.65]
\draw (2.3,-0.5)--(1,0)--(0.3,1)--(1.43,1.93);
\draw (1,0)--(1,1)--(1.43,1.93);
\draw (2.3,-0.5)--(3.4,1)--(1.6,1.96);
\draw (1,0)--(1.8,1)--(1.57,1.93);
\draw (2.3,-0.5)--(2.6,1)--(1.57,1.93);
\draw [fill] (2.3,-0.5) circle [radius=0.1];
\draw [fill] (1,0) circle [radius=0.1];
\draw [fill] (0.3,1) circle [radius=0.1];
\draw [fill] (1,1) circle [radius=0.1];
\draw [fill] (1.8,1) circle [radius=0.1];
\draw [fill] (2.6,1) circle [radius=0.1];
\draw (1.5,2) circle [radius=0.1];
\draw [fill] (1.8,1) circle [radius=0.1];
\draw [fill] (3.4,1) circle [radius=0.1];
\node [right] at (2.3,-0.5) {$2$};
\node [right] at (0.9,-0.3) {$2$};
\node [right] at (0.3,1) {\textrm{-}$1$};
\node [right] at (1.8,1) {\textrm{-}$1$};
\node [right] at (2.6,1) {\textrm{-}$1$};
\node [right] at (1,1) {\textrm{-}$1$};
\node [right] at (3.4,1) {\textrm{-}$1$};
\node [right] at (1.5,2.1) {$1$};
\end{tikzpicture}}
}
\hspace{-2mm}
\subfigure[$8_{\rm B}$]
{{\scalefont{0.5}
\begin{tikzpicture}[scale=0.65]
\draw (2.3,-0.5)--(1,0)--(0.3,1)--(1.43,1.93);
\draw (1,0)--(1,1)--(1.43,1.93);
\draw (1,0)--(1.8,1)--(1.57,1.93);
\draw (2.3,-0.5)--(2.6,1)--(1.57,1.93);
\draw (2.45,0.25)--(1.8,1);
\draw [fill] (2.3,-0.5) circle [radius=0.1];
\draw [fill] (1,0) circle [radius=0.1];
\draw [fill] (0.3,1) circle [radius=0.1];
\draw [fill] (1,1) circle [radius=0.1];
\draw [fill] (1.8,1) circle [radius=0.1];
\draw [fill] (2.6,1) circle [radius=0.1];
\draw [fill] (2.45,0.25) circle [radius=0.1];
\draw (1.5,2) circle [radius=0.1];
\draw [fill] (1.8,1) circle [radius=0.1];
\node [right] at (2.3,-0.5) {$0$};
\node [right] at (2.45,0.25) {$1$};
\node [right] at (0.9,-0.3) {$2$};
\node [right] at (0.3,1) {\textrm{-}$1$};
\node [right] at (1.8,1) {\textrm{-}$1$};
\node [right] at (2.6,1) {\textrm{-}$1$};
\node [right] at (1,1) {\textrm{-}$1$};
\node [right] at (1.5,2) {$1$};
\end{tikzpicture}}
}
\hspace{-2mm}
\subfigure[$8_{\rm C}$]
{{\scalefont{0.5}
\begin{tikzpicture}[scale=0.65]
\draw (2,0)--(1,-0.5)--(0,0)--(0,1)--(0.92,1.93);
\draw (2,0)--(1,0.5);
\draw (0,0)--(1,0.5)--(1,1.9);
\draw (2,0)--(2,1)--(1.08,1.93);
\draw [fill] (2,0) circle [radius=0.1];
\draw [fill] (1,-0.5) circle [radius=0.1];
\draw [fill] (0,0) circle [radius=0.1];
\draw [fill] (0,1) circle [radius=0.1];
\draw [fill] (1,1.25) circle [radius=0.1];
\draw [fill] (1,0.5) circle [radius=0.1];
\draw (1,2) circle [radius=0.1];
\draw [fill] (2,1) circle [radius=0.1];
\node [right] at (2,0) {$1$};
\node [right] at (1,1.25) {\textrm{-}$1$};
\node [right] at (1,-0.6) {$0$};
\node [right] at (0.2,0) {$1$};
\node [right] at (0,1) {\textrm{-}$1$};
\node [right] at (1.05,0.57) {$0$};
\node [right] at (2,1) {\textrm{-}$1$};
\node [right] at (1,2) {$1$};
\end{tikzpicture}}
}
\hspace{-2mm}
\subfigure[$8_{\rm D}$]
{{\scalefont{0.5}
\begin{tikzpicture}[scale=0.65]
\draw (0,2)--(-0.5,3)--(-0.5,4)--(-0.06,4.93);
\draw (0,2)--(0.5,3)--(0.5,4)--(0.06,4.93);
\draw (-0.5,3)--(-1.25,4)--(-0.06,4.93);
\draw (0.5,3)--(1.25,4)--(0.06,4.93);
\draw [fill] (0.5,4) circle [radius=0.1];
\draw (0,5) circle [radius=0.1];
\draw [fill] (0,2) circle [radius=0.1];
\draw [fill] (1.25,4) circle [radius=0.1];
\draw [fill] (-1.25,4) circle [radius=0.1];
\draw [fill] (-0.5,3) circle [radius=0.1];
\draw [fill] (0.5,3) circle [radius=0.1];
\draw [fill] (-0.5,4) circle [radius=0.1];
\node [right] at (-0.5,4) {\textrm{-}$1$};
\node [right] at (0,2) {$1$};
\node [right] at (-0.5,3) {$1$};
\node [right] at (0.5,3) {$1$};
\node [right] at (0.5,4) {\textrm{-}$1$};
\node [right] at (0,5) {$1$};
\node [right] at (-1.25,4) {\textrm{-}$1$};
\node [right] at (1.25,4) {\textrm{-}$1$};
\end{tikzpicture}}
}
\hspace{-2mm}
\subfigure[$8_{\rm E}$]
{{\scalefont{0.5}
\begin{tikzpicture}[scale=0.65]
\draw (2.3,-0.5)--(1,0)--(0.3,1)--(1.43,1.93);
\draw (1,0)--(1,1)--(1.43,1.93);
\draw (1,0)--(1.8,1)--(1.57,1.93);
\draw (1,0)--(-0.5,1)--(1.42,1.96);
\draw (2.3,-0.5)--(2.6,1)--(1.57,1.93);
\draw [fill] (2.3,-0.5) circle [radius=0.1];
\draw [fill] (1,0) circle [radius=0.1];
\draw [fill] (0.3,1) circle [radius=0.1];
\draw [fill] (1,1) circle [radius=0.1];
\draw [fill] (1.8,1) circle [radius=0.1];
\draw [fill] (2.6,1) circle [radius=0.1];
\draw [fill] (-0.5,1) circle [radius=0.1];
\draw (1.5,2) circle [radius=0.1];
\draw [fill] (1.8,1) circle [radius=0.1];
\node [right] at (2.3,-0.5) {$1$};
\node [right] at (0.9,-0.3) {$3$};
\node [right] at (0.3,1) {\textrm{-}$1$};
\node [right] at (1.8,1) {\textrm{-}$1$};
\node [right] at (2.6,1) {\textrm{-}$1$};
\node [right] at (1,1) {\textrm{-}$1$};
\node [right] at (-0.5,1) {\textrm{-}$1$};
\node [right] at (1.5,2) {$1$};
\end{tikzpicture}}
}
\hspace{-4mm}
\subfigure[$8_{\rm F}$]
{{\scalefont{0.5}
\begin{tikzpicture}[scale=0.65]
\draw (2,0.2)--(1,-0.5)--(0,0.2)--(0,1)--(0.92,1.93);
\draw (2,0.2)--(1,1);
\draw (0,0.2)--(1,1)--(1,1.9);
\draw (1,-0.5)--(3.3,0.5)--(1.1,1.95);
\draw (2,0.2)--(2,1)--(1.08,1.93);
\draw [fill] (2,0.2) circle [radius=0.1];
\draw [fill] (1,-0.5) circle [radius=0.1];
\draw [fill] (0,0.2) circle [radius=0.1];
\draw [fill] (0,1) circle [radius=0.1];
\draw [fill] (1,1) circle [radius=0.1];
\draw [fill] (3.3,0.5) circle [radius=0.1];
\draw (1,2) circle [radius=0.1];
\draw [fill] (2,1) circle [radius=0.1];
\node [right] at (2,0.25) {$1$};
\node [right] at (1,-0.7) {$1$};
\node [right] at (0.05,0.2) {$1$};
\node [right] at (0,1) {\textrm{-}$1$};
\node [right] at (1,1) {\textrm{-}$1$};
\node [right] at (2,0.9) {\textrm{-}$1$};
\node [right] at (1,2.1) {$1$};
\node [right] at (3.3,0.5) {\textrm{-}$1$};
\end{tikzpicture}}
}
\hspace{-4mm}
\subfigure[$8_{\rm G}$]
{{\scalefont{0.5}
\begin{tikzpicture}[scale=0.65]
\draw (2.5,-0.5)--(1.5,0)--(0.5,1)--(1.43,1.93);
\draw (1.5,0)--(1.2,1)--(1.43,1.93);
\draw (2.5,-0.5)--(2,1)--(1.57,1.93);
\draw (2.5,-0.5)--(2.8,1)--(1.57,1.95);
\draw (2.5,-0.5)--(3.6,1)--(1.58,1.96);
\draw [fill] (2.5,-0.5) circle [radius=0.1];
\draw [fill] (1.5,0) circle [radius=0.1];
\draw [fill] (0.5,1) circle [radius=0.1];
\draw [fill] (1.2,1) circle [radius=0.1];
\draw [fill] (2,1) circle [radius=0.1];
\draw [fill] (3.6,1) circle [radius=0.1];
\draw [fill] (2.8,1) circle [radius=0.1];
\draw (1.5,2) circle [radius=0.1];
\draw [fill] (2,1) circle [radius=0.1];
\node [right] at (2.5,-0.5) {$3$};
\node [right] at (1.5,0.05) {$1$};
\node [right] at (0.5,1) {\textrm{-}$1$};
\node [right] at (2,1) {\textrm{-}$1$};
\node [right] at (2.8,0.95) {\textrm{-}$1$};
\node [right] at (1.2,1) {\textrm{-}$1$};
\node [right] at (3.6,0.95) {\textrm{-}$1$};
\node [right] at (1.5,2.1) {$1$};
\end{tikzpicture}}
}
\hspace{-4mm}
\subfigure[$8_{\rm H}$]
{{\scalefont{0.5}
\begin{tikzpicture}[scale=0.65]
\draw (0,0)--(-1.5,1.5)--(-0.1,2.37);
\draw (-1,1)--(-0.5,1.5)--(-0.04,2.32);
\draw (-0.5,0.5)--(0.5,1.5)--(0.04,2.32);
\draw (0,0)--(1.5,1.5)--(0.1,2.37);
\draw (0,2.4) circle [radius=0.1];
\draw [fill] (0,0) circle [radius=0.1];
\draw [fill] (-0.5,0.5) circle [radius=0.1];
\draw [fill] (-1,1) circle [radius=0.1];
\draw [fill] (-1.5,1.5) circle [radius=0.1];
\draw [fill] (1.5,1.5) circle [radius=0.1];
\draw [fill] (-0.5,1.5) circle [radius=0.1];
\draw [fill] (0.5,1.5) circle [radius=0.1];
\node [right] at (0,2.55) {$1$};
\node [right] at (-1.5,1.5) {\textrm{-}$1$};
\node [right] at (-0.5,1.5) {\textrm{-}$1$};
\node [right] at (0.5,1.5) {\textrm{-}$1$};
\node [right] at (1.5,1.5) {\textrm{-}$1$};
\node [right] at (-0.45,0.5) {$1$};
\node [right] at (-0.95,1) {$1$};
\node [right] at (0.05,0) {$1$};
\end{tikzpicture}}
}
\hspace{-4mm}
\subfigure[$8_{\rm I}$]
{{\scalefont{0.5}
\begin{tikzpicture}[scale=0.65]
\draw (1,0)--(-1.75,1)--(0.92,1.92);
\draw (1,0)--(-0.75,1)--(0.92,1.92);
\draw (1,0)--(0.2,1)--(0.92,1.92);
\draw (1,0)--(1.8,1)--(1.08,1.92);
\draw (1,0)--(2.75,1)--(1.08,1.92);
\draw (1,0)--(3.75,1)--(1.08,1.92);
\draw [fill] (-1.75,1) circle [radius=0.1];
\draw [fill] (3.75,1) circle [radius=0.1];
\draw [fill] (0.2,1) circle [radius=0.1];
\draw [fill] (1.8,1) circle [radius=0.1];
\draw [fill] (1,0) circle [radius=0.1];
\draw [fill] (2.75,1) circle [radius=0.1];
\draw [fill] (-0.75,1) circle [radius=0.1];
\draw (1,2) circle [radius=0.1];
\node [right] at (1,-0.1) {$5$};
\node [right] at (0.2,1) {\textrm{-}$1$};
\node [right] at (-0.7,1) {\textrm{-}$1$};
\node [right] at (2.75,1) {\textrm{-}$1$};
\node [right] at (1.8,1) {\textrm{-}$1$};
\node [right] at (1,2.05) {$1$};
\node [right] at (-1.75,1) {\ \textrm{-}$1$};
\node [right] at (3.75,1) {\textrm{-}$1$};
\end{tikzpicture}}
}
\hspace{-4mm}
\subfigure[$8_{\rm J}$]
{{\scalefont{0.5}
\begin{tikzpicture}[scale=0.65]
\draw (1,0)--(0,1)--(0,2)--(0.92,2.92);
\draw (1,0)--(1,1)--(0,2);
\draw (0,1)--(1,2)--(1,2.9);
\draw (1,0)--(2,1)--(2,2)--(1.08,2.92);
\draw (1,1)--(2,2);
\draw (2,1)--(1,2);
\draw [fill] (0,1) circle [radius=0.1];
\draw [fill] (1,1) circle [radius=0.1];
\draw [fill] (2,1) circle [radius=0.1];
\draw [fill] (1,0) circle [radius=0.1];
\draw [fill] (0,2) circle [radius=0.1];
\draw [fill] (1,2) circle [radius=0.1];
\draw [fill] (2,2) circle [radius=0.1];
\draw (1,3) circle [radius=0.1];
\node [right] at (1,0) {\textrm{-}$1$};
\node [right] at (0,1) {$1$};
\node [right] at (1,1) {$1$};
\node [right] at (2,1) {$1$};
\node [right] at (0,2) {\textrm{-}$1$};
\node [right] at (1,2) {\textrm{-}$1$};
\node [right] at (2,2) {\textrm{-}$1$};
\node [right] at (1,3) {$1$};
\end{tikzpicture}}}
\caption{The Hasse-diagrams of the meet semilattices in the proof of Theorem \ref{th:yksikäsitteisyyslause}. For every semilattice the number next to each element is the value $\mu_S(x_i,x_8)$, where $x_8$ is the last added element and is denoted by the white dot.}\label{fig:luokat}
\end{figure}
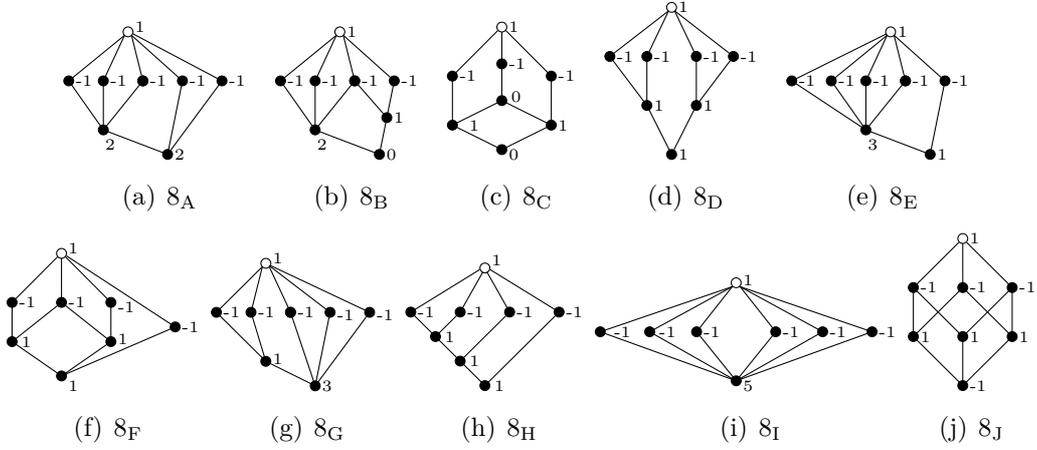

\begin{figure}[ht!]
\centering
\subfigure[$8_{\rm A}$]
{{\scalefont{0.8}
\begin{tikzpicture}[scale=0.75]
\draw (2.3,-0.5)--(1,0)--(0.3,1)--(1.43,1.93);
\draw (1,0)--(1,1)--(1.43,1.93);
\draw (2.3,-0.5)--(3.4,1)--(1.6,1.96);
\draw (1,0)--(1.8,1)--(1.57,1.93);
\draw (2.3,-0.5)--(2.6,1)--(1.57,1.93);
\draw [fill] (2.3,-0.5) circle [radius=0.1];
\draw [fill] (1,0) circle [radius=0.1];
\draw [fill] (0.3,1) circle [radius=0.1];
\draw [fill] (1,1) circle [radius=0.1];
\draw [fill] (1.8,1) circle [radius=0.1];
\draw [fill] (2.6,1) circle [radius=0.1];
\draw [fill] (1.5,2) circle [radius=0.1];
\draw [fill] (1.8,1) circle [radius=0.1];
\draw [fill] (3.4,1) circle [radius=0.1];
\node [right] at (2.3,-0.5) {$x_1$};
\node [right] at (0.9,-0.3) {$x_2$};
\node [right] at (0.3,1) {$x_3$};
\node [right] at (1.8,1) {$x_5$};
\node [right] at (2.6,1) {$x_6$};
\node [right] at (1,1) {$x_4$};
\node [right] at (3.4,1) {$x_7$};
\node [right] at (1.5,2.1) {$x_8$};
\end{tikzpicture}}
}
\subfigure[$8_{\rm B}$]
{{\scalefont{0.8}
\begin{tikzpicture}[scale=0.75]
\draw (2.3,-0.5)--(1,0)--(0.3,1)--(1.43,1.93);
\draw (1,0)--(1,1)--(1.43,1.93);
\draw (1,0)--(1.8,1)--(1.57,1.93);
\draw (2.3,-0.5)--(2.6,1)--(1.57,1.93);
\draw (2.45,0.25)--(1.8,1);
\draw [fill] (2.3,-0.5) circle [radius=0.1];
\draw [fill] (1,0) circle [radius=0.1];
\draw [fill] (0.3,1) circle [radius=0.1];
\draw [fill] (1,1) circle [radius=0.1];
\draw [fill] (1.8,1) circle [radius=0.1];
\draw [fill] (2.6,1) circle [radius=0.1];
\draw [fill] (2.45,0.25) circle [radius=0.1];
\draw [fill] (1.5,2) circle [radius=0.1];
\draw [fill] (1.8,1) circle [radius=0.1];
\node [right] at (2.3,-0.5) {$x_1$};
\node [right] at (2.45,0.25) {$x_3$};
\node [right] at (0.9,-0.3) {$x_2$};
\node [right] at (0.3,1) {$x_4$};
\node [right] at (1.8,1) {$x_6$};
\node [right] at (2.6,1) {$x_7$};
\node [right] at (1,1) {$x_5$};
\node [right] at (1.5,2) {$x_8$};
\end{tikzpicture}}
}
\subfigure[$8_{\rm C}$]
{{\scalefont{0.8}
\begin{tikzpicture}[scale=0.75]
\draw (2,0)--(1,-0.5)--(0,0)--(0,1)--(0.92,1.93);
\draw (2,0)--(1,0.5);
\draw (0,0)--(1,0.5)--(1,1.9);
\draw (2,0)--(2,1)--(1.08,1.93);
\draw [fill] (2,0) circle [radius=0.1];
\draw [fill] (1,-0.5) circle [radius=0.1];
\draw [fill] (0,0) circle [radius=0.1];
\draw [fill] (0,1) circle [radius=0.1];
\draw [fill] (1,1.25) circle [radius=0.1];
\draw [fill] (1,0.5) circle [radius=0.1];
\draw [fill] (1,2) circle [radius=0.1];
\draw [fill] (2,1) circle [radius=0.1];
\node [right] at (2,0) {$x_3$};
\node [right] at (1,1.25) {$x_6$};
\node [right] at (1,-0.6) {$x_1$};
\node [right] at (0.15,-0.03) {$x_2$};
\node [right] at (0,1) {$x_4$};
\node [right] at (1.05,0.57) {$x_5$};
\node [right] at (2,1) {$x_7$};
\node [right] at (1,2) {$x_8$};
\end{tikzpicture}}
}
\subfigure[$8_{\rm D}$]
{{\scalefont{0.8}
\begin{tikzpicture}[scale=0.75]
\draw (0,2)--(-0.5,3)--(-0.5,4)--(-0.06,4.93);
\draw (0,2)--(0.5,3)--(0.5,4)--(0.06,4.93);
\draw (-0.5,3)--(-1.25,4)--(-0.06,4.93);
\draw (0.5,3)--(1.25,4)--(0.06,4.93);
\draw [fill] (0.5,4) circle [radius=0.1];
\draw [fill] (0,5) circle [radius=0.1];
\draw [fill] (0,2) circle [radius=0.1];
\draw [fill] (1.25,4) circle [radius=0.1];
\draw [fill] (-1.25,4) circle [radius=0.1];
\draw [fill] (-0.5,3) circle [radius=0.1];
\draw [fill] (0.5,3) circle [radius=0.1];
\draw [fill] (-0.5,4) circle [radius=0.1];
\node [right] at (-0.5,4) {$x_5$};
\node [right] at (0,2) {$x_1$};
\node [right] at (-0.5,3) {$x_2$};
\node [right] at (0.5,3) {$x_3$};
\node [right] at (0.5,4) {$x_6$};
\node [right] at (0,5) {$x_8$};
\node [right] at (-1.25,4) {$x_4$};
\node [right] at (1.25,4) {$x_7$};
\end{tikzpicture}}
}
\subfigure[$8_{\rm E}$]
{{\scalefont{0.8}
\begin{tikzpicture}[scale=0.75]
\draw (2.3,-0.5)--(1,0)--(0.3,1)--(1.43,1.93);
\draw (1,0)--(1,1)--(1.43,1.93);
\draw (1,0)--(1.8,1)--(1.57,1.93);
\draw (1,0)--(-0.5,1)--(1.42,1.96);
\draw (2.3,-0.5)--(2.6,1)--(1.57,1.93);
\draw [fill] (2.3,-0.5) circle [radius=0.1];
\draw [fill] (1,0) circle [radius=0.1];
\draw [fill] (0.3,1) circle [radius=0.1];
\draw [fill] (1,1) circle [radius=0.1];
\draw [fill] (1.8,1) circle [radius=0.1];
\draw [fill] (2.6,1) circle [radius=0.1];
\draw [fill] (-0.5,1) circle [radius=0.1];
\draw [fill] (1.5,2) circle [radius=0.1];
\draw [fill] (1.8,1) circle [radius=0.1];
\node [right] at (2.3,-0.5) {$x_1$};
\node [right] at (0.9,-0.3) {$x_2$};
\node [right] at (0.3,1) {$x_4$};
\node [right] at (1.8,1) {$x_6$};
\node [right] at (2.6,1) {$x_7$};
\node [right] at (1,1) {$x_5$};
\node [right] at (-0.44,0.96) {$x_3$};
\node [right] at (1.5,2) {$x_8$};
\end{tikzpicture}}
}
\subfigure[$8_{\rm F}$]
{{\scalefont{0.8}
\begin{tikzpicture}[scale=0.75]
\draw (2,0.2)--(1,-0.5)--(0,0.2)--(0,1)--(0.92,1.93);
\draw (2,0.2)--(1,1);
\draw (0,0.2)--(1,1)--(1,1.9);
\draw (1,-0.5)--(3.3,0.5)--(1.1,1.95);
\draw (2,0.2)--(2,1)--(1.08,1.93);
\draw [fill] (2,0.2) circle [radius=0.1];
\draw [fill] (1,-0.5) circle [radius=0.1];
\draw [fill] (0,0.2) circle [radius=0.1];
\draw [fill] (0,1) circle [radius=0.1];
\draw [fill] (1,1) circle [radius=0.1];
\draw [fill] (3.3,0.5) circle [radius=0.1];
\draw [fill] (1,2) circle [radius=0.1];
\draw [fill] (2,1) circle [radius=0.1];
\node [right] at (2,0.32) {$x_3$};
\node [right] at (1,-0.7) {$x_1$};
\node [right] at (0.05,0.2) {$x_2$};
\node [right] at (0,1) {$x_5$};
\node [right] at (1,1) {$x_6$};
\node [right] at (2,0.85) {$x_7$};
\node [right] at (1,2.1) {$x_8$};
\node [right] at (3.3,0.5) {$x_4$};
\end{tikzpicture}}
}
\subfigure[$8_{\rm G}$]
{{\scalefont{0.8}
\begin{tikzpicture}[scale=0.75]
\draw (2.5,-0.5)--(1.5,0)--(0.5,1)--(1.43,1.93);
\draw (1.5,0)--(1.2,1)--(1.43,1.93);
\draw (2.5,-0.5)--(2,1)--(1.57,1.93);
\draw (2.5,-0.5)--(2.8,1)--(1.57,1.95);
\draw (2.5,-0.5)--(3.6,1)--(1.58,1.96);
\draw [fill] (2.5,-0.5) circle [radius=0.1];
\draw [fill] (1.5,0) circle [radius=0.1];
\draw [fill] (0.5,1) circle [radius=0.1];
\draw [fill] (1.2,1) circle [radius=0.1];
\draw [fill] (2,1) circle [radius=0.1];
\draw [fill] (3.6,1) circle [radius=0.1];
\draw [fill] (2.8,1) circle [radius=0.1];
\draw [fill] (1.5,2) circle [radius=0.1];
\draw [fill] (2,1) circle [radius=0.1];
\node [right] at (2.5,-0.5) {$x_1$};
\node [right] at (1.5,0.05) {$x_2$};
\node [right] at (0.5,1) {$x_3$};
\node [right] at (2,1) {$x_5$};
\node [right] at (2.8,0.95) {$x_6$};
\node [right] at (1.2,1) {$x_4$};
\node [right] at (3.6,0.95) {$x_7$};
\node [right] at (1.5,2.1) {$x_8$};
\end{tikzpicture}}
}
\subfigure[$8_{\rm H}$]
{{\scalefont{0.8}
\begin{tikzpicture}[scale=0.75]
\draw (0,0)--(-1.5,1.5)--(-0.1,2.37);
\draw (-1,1)--(-0.5,1.5)--(-0.04,2.32);
\draw (-0.5,0.5)--(0.5,1.5)--(0.04,2.32);
\draw (0,0)--(1.5,1.5)--(0.1,2.37);
\draw [fill] (0,2.4) circle [radius=0.1];
\draw [fill] (0,0) circle [radius=0.1];
\draw [fill] (-0.5,0.5) circle [radius=0.1];
\draw [fill] (-1,1) circle [radius=0.1];
\draw [fill] (-1.5,1.5) circle [radius=0.1];
\draw [fill] (1.5,1.5) circle [radius=0.1];
\draw [fill] (-0.5,1.5) circle [radius=0.1];
\draw [fill] (0.5,1.5) circle [radius=0.1];
\node [right] at (0,2.55) {$x_8$};
\node [right] at (-1.45,1.47) {$x_4$};
\node [right] at (-0.5,1.5) {$x_5$};
\node [right] at (0.5,1.5) {$x_6$};
\node [right] at (1.5,1.5) {$x_7$};
\node [right] at (-0.45,0.5) {$x_2$};
\node [right] at (-0.95,1) {$x_3$};
\node [right] at (0.05,0) {$x_1$};
\end{tikzpicture}}
}
\caption{The numbering of elements of $S$ in the cases when $S$ belongs to classes $8_{\rm A},8_{\rm B},\ldots,8_{\rm H}$.}\label{fig:valinnat}
\end{figure}
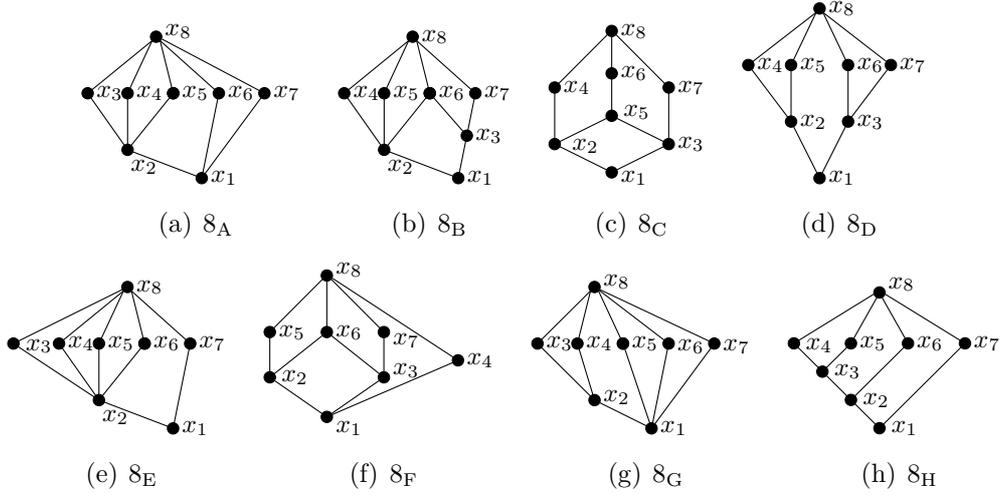

(i) Let $(S,|)\in 8_{\rm A}$. Then $\{x_2,x_3,x_4,x_5,x_8\}\in{\cal S}_{3,5}$, $\{x_1,x_6\},\{x_1,x_7\}\in{\cal S}_{1,2}$ and
\begin{align*}
\Psi_{S,\frac{1}{N}}(x_8)&=\frac{1}{x_8}-\frac{1}{x_7}-\frac{1}{x_6}-\frac{1}{x_5}-\frac{1}{x_4}-\frac{1}{x_3}+\frac{2}{x_2}+\frac{2}{x_1}\\
&=\underbrace{\left(\frac{1}{x_8}-\frac{1}{x_5}-\frac{1}{x_4}-\frac{1}{x_3}+\frac{2}{x_2}\right)}_{>0}+\underbrace{\left(\frac{1}{x_1}-\frac{1}{x_7}\right)}_{>0}+\underbrace{\left(\frac{1}{x_1}-\frac{1}{x_6}\right)}_{>0}>0.
\end{align*}

(ii) Let $(S,|)\in 8_{\rm B}$. Then $\{x_2,x_4,x_5,x_6,x_8\}\in{\cal S}_{3,5}$, $\{x_3,x_7\}\in{\cal S}_{1,2}$ and
\begin{align*}
\Psi_{S,\frac{1}{N}}(x_8)&=\frac{1}{x_8}-\frac{1}{x_7}-\frac{1}{x_6}-\frac{1}{x_5}-\frac{1}{x_4}+\frac{1}{x_3}+\frac{2}{x_2}\\
&=\underbrace{\left(\frac{1}{x_8}-\frac{1}{x_6}-\frac{1}{x_5}-\frac{1}{x_4}+\frac{2}{x_2}\right)}_{>0}+\underbrace{\left(\frac{1}{x_3}-\frac{1}{x_7}\right)}_{>0}>0.
\end{align*}

(iii) Let $(S,|)\in 8_{\rm C}$. Then $\{x_2,x_4,x_6,x_8\}\in{\cal S}_{2,4}$, $\{x_3,x_7\}\in{\cal S}_{1,2}$ and
\begin{align*}
\Psi_{S,\frac{1}{N}}(x_8)&=\frac{1}{x_8}-\frac{1}{x_7}-\frac{1}{x_6}-\frac{1}{x_4}+\frac{1}{x_3}+\frac{1}{x_2}\\
&=\underbrace{\left(\frac{1}{x_8}-\frac{1}{x_6}-\frac{1}{x_4}+\frac{1}{x_2}\right)}_{>0}+\underbrace{\left(\frac{1}{x_3}-\frac{1}{x_7}\right)}_{>0}>0.
\end{align*}

(iv) Let $(S,|)\in 8_{\rm D}$. Then $\{x_2,x_4,x_5,x_8\}\in{\cal S}_{2,4}$, $\{x_3,x_6\},\{x_1,x_7\}\in{\cal S}_{1,2}$ and
\begin{align*}
\Psi_{S,\frac{1}{N}}(x_8)&=\frac{1}{x_8}-\frac{1}{x_7}-\frac{1}{x_6}-\frac{1}{x_5}-\frac{1}{x_4}+\frac{1}{x_3}+\frac{1}{x_2}+\frac{1}{x_1}\\
&=\underbrace{\left(\frac{1}{x_8}-\frac{1}{x_5}-\frac{1}{x_4}+\frac{1}{x_2}\right)}_{>0}+\underbrace{\left(\frac{1}{x_3}-\frac{1}{x_6}\right)}_{>0}+\underbrace{\left(\frac{1}{x_1}-\frac{1}{x_7}\right)}_{>0}>0.
\end{align*}

(v) Let $(S,|)\in 8_{\rm E}$. Then $\{x_2,x_3,x_4,x_5,x_6,x_8\}\in{\cal S}_{4,6}$, $\{x_1,x_7\}\in{\cal S}_{1,2}$ and
\begin{align*}
\Psi_{S,\frac{1}{N}}(x_8)&=\frac{1}{x_8}-\frac{1}{x_7}-\frac{1}{x_6}-\frac{1}{x_5}-\frac{1}{x_4}-\frac{1}{x_3}+\frac{3}{x_2}+\frac{1}{x_1}\\
&=\underbrace{\left(\frac{1}{x_8}-\frac{1}{x_6}-\frac{1}{x_5}-\frac{1}{x_4}-\frac{1}{x_3}+\frac{3}{x_2}\right)}_{>0}+\underbrace{\left(\frac{1}{x_1}-\frac{1}{x_7}\right)}_{>0}>0.
\end{align*}

(vi) Let $(S,|)\in 8_{\rm F}$. Then $\{x_2,x_5,x_6,x_8\}\in{\cal S}_{2,4}$, $\{x_3,x_7\},\{x_1,x_4\}\in{\cal S}_{1,2}$ and
\begin{align*}
\Psi_{S,\frac{1}{N}}(x_8)&=\frac{1}{x_8}-\frac{1}{x_7}-\frac{1}{x_6}-\frac{1}{x_5}-\frac{1}{x_4}+\frac{1}{x_3}+\frac{1}{x_2}+\frac{1}{x_1}\\
&=\underbrace{\left(\frac{1}{x_8}-\frac{1}{x_6}-\frac{1}{x_5}+\frac{1}{x_2}\right)}_{>0}+\underbrace{\left(\frac{1}{x_3}-\frac{1}{x_7}\right)}_{>0}+\underbrace{\left(\frac{1}{x_1}-\frac{1}{x_4}\right)}_{>0}>0.
\end{align*}

(vii) Let $(S,|)\in 8_{\rm G}$. Then $\{x_1,x_5,x_6,x_7,x_8\}\in{\cal S}_{3,5}$, $\{x_2,x_3\},\{x_1,x_4\}\in{\cal S}_{1,2}$ and
\begin{align*}
\Psi_{S,\frac{1}{N}}(x_8)&=\frac{1}{x_8}-\frac{1}{x_7}-\frac{1}{x_6}-\frac{1}{x_5}-\frac{1}{x_4}-\frac{1}{x_3}+\frac{1}{x_2}+\frac{3}{x_1}\\
&=\underbrace{\left(\frac{1}{x_8}-\frac{1}{x_7}-\frac{1}{x_6}-\frac{1}{x_5}+\frac{2}{x_1}\right)}_{>0}+\underbrace{\left(\frac{1}{x_2}-\frac{1}{x_3}\right)}_{>0}+\underbrace{\left(\frac{1}{x_1}-\frac{1}{x_4}\right)}_{>0}>0.
\end{align*}

(viii) Let $(S,|)\in 8_{\rm H}$. Then $\{x_3,x_4,x_5,x_8\}\in{\cal S}_{2,4}$, $\{x_2,x_6\},\{x_1,x_7\}\in{\cal S}_{1,2}$ and
\begin{align*}
\Psi_{S,\frac{1}{N}}(x_8)&=\frac{1}{x_8}-\frac{1}{x_7}-\frac{1}{x_6}-\frac{1}{x_5}-\frac{1}{x_4}+\frac{1}{x_3}+\frac{1}{x_2}+\frac{1}{x_1}\\
&=\underbrace{\left(\frac{1}{x_8}-\frac{1}{x_5}-\frac{1}{x_4}+\frac{1}{x_3}\right)}_{>0}+\underbrace{\left(\frac{1}{x_2}-\frac{1}{x_6}\right)}_{>0}+\underbrace{\left(\frac{1}{x_1}-\frac{1}{x_7}\right)}_{>0}>0.
\end{align*}
Thus we have shown that $(S,|)$ must belong to class $8_{\rm J}$ and our proof is complete.
\end{proof}

\begin{remark}\label{re:sage}
Since there are $1078$ meet semilattices with $8$ elements, one of the main challenges in the proof of Theorem \ref{th:yksikäsitteisyyslause} is to find all suitable meet semilattices and rule out the rest. We used Sage 6.1.1 (see \cite{Stein}) in order to accomplish this. First we define the set of all meet semilattices with $8$ elements by using the command
\begin{verbatim}
L8=[p for p in Posets(8) if p.is_meet_semilattice()] .
\end{verbatim}
The command
\begin{verbatim}
L8=[l for l in L8 if not any (
 len(l.lower_covers(m))<3
 for  m in l.maximal_elements()  )] 
\end{verbatim}
then rules out all such semilattices in which some maximal element covers less than three elements. After that the command
\begin{verbatim}
L8=[l for l in L8 if not any (
 len(l.lower_covers(e))<=1 and
 len(l.upper_covers(e))==1 and
 l.mobius_function(e,7)==0
 for e in l.list()  )] 
\end{verbatim}
makes sure that in the remaining semilattices there are no elements $x_i$ such that $\mu_S(x_i,x_8)=0$, $x_i$ covers at most one other element and is covered by only one. Now the command 
\begin{verbatim}
for l in L8: l.dual().show()
\end{verbatim}
shows the Hasse diagrams of the meet semilattices $8_{\rm A},\ldots,8_{\rm J}$ in question.
\end{remark}

It is easy to see that if $S$ is an odd GCD closed set with at most $8$ elements, then the LCM matrix $[S]$ is always nonsingular (an odd set is a set whose all elements are odd). The only possibility to obtain a singular LCM matrix $[S]$ would be $(S,|)\in8_{\rm J}$, but this is impossible, since in this case
\begin{align*}
\Psi_{S,\frac{1}{N}}(x_8)&=\frac{1}{x_8}-\frac{1}{x_7}-\frac{1}{x_6}-\frac{1}{x_5}+\frac{1}{x_4}+\frac{1}{x_3}+\frac{1}{x_2}-\frac{1}{x_1}\\
&=\frac{1}{x_1}\underbrace{\left(-1+\underbrace{\frac{x_1}{x_2}}_{\leq\frac{1}{3}}+\underbrace{\frac{x_1}{x_3}}_{\leq\frac{1}{5}}+\underbrace{\frac{x_1}{x_4}}_{\leq\frac{1}{7}}\right)}_{<0}+\underbrace{\left(\frac{1}{x_8}-\frac{1}{x_7}-\frac{1}{x_6}-\frac{1}{x_5}\right)}_{<0}<0.
\end{align*}
Hong \cite{Ho3} took the idea of nonsigularity of LCM matrices of odd GCD closed sets even further by presenting the following conjecture:

\begin{conjecture}[\cite{Ho3}, Conjecture 4.4]\label{conj1}
The LCM matrix $[S]$ defined on any odd GCD closed set $S$ is nonsingular.
\end{conjecture}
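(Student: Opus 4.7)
The abstract and the preamble of Section~4 make clear that the authors intend to \emph{disprove} this conjecture, so my plan is to construct an explicit odd GCD-closed set $S$ whose LCM matrix is singular. The computation just given before the statement already forces such a counterexample to satisfy $|S|\geq 9$: Theorem~\ref{th:yksik�sitteisyyslause} shows that the only $8$-element meet-semilattice structure that can yield a singular LCM matrix is $8_{\rm J}$, and the displayed computation bounds $\Psi_{S,\frac{1}{N}}(x_8)$ strictly below zero whenever all $x_i$ are odd, because for odd comparable divisors $x_i\,|\,x_j$ with $x_i\neq x_j$ we have $x_j/x_i\geq 3$.

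By Proposition~\ref{psi-lause}, it suffices to produce an odd GCD-closed set $S=\{x_1,\ldots,x_n\}$ and an index $i$ with $\Psi_{S,\frac{1}{N}}(x_i)=0$. My strategy is a guided computer search, in the spirit of Remark~\ref{re:sage}. First, enumerate meet-semilattice structures $(L,\preceq)$ with $n=9,10,\ldots$ elements, keeping only those for which the necessary conditions extracted in the proof of Theorem~\ref{th:yksik�sitteisyyslause} can hold (every maximal element covers at least three others; no Mobius-trivial intermediate vertex). For each remaining $(L,\preceq)$, compute $\mu_L(\cdot,\hat 1)$ symbolically and obtain the rational function
\[
\Psi(\hat 1)=\sum_{y\preceq \hat 1}\frac{\mu_L(y,\hat 1)}{n(y)},
\]
viewed as a function of the odd-integer labels $n(y)$ realizing the divisibility order. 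Then try to tune the labels (products of small odd primes $3,5,7,11,\ldots$) so that this expression vanishes while all covering ratios remain odd integers $\geq 3$.

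The main obstacle is precisely the odd-ratio constraint. Each negative term $-1/n(y)$ coming from a Mobius value $-1$ high in the lattice can only be cancelled by positive terms whose denominators are at least three times smaller, and these positive terms $1/n(y')$ with $y'$ lower in $L$ are automatically of \emph{smaller} absolute value. To make cancellation possible, I would favour structures whose top element is covered by many coatoms and for which several elements further down carry positive Mobius values; this gives enough room to use a few primes of moderate size to fine-tune the sum. A plausible starting point is to augment the $8_{\rm J}$ structure (whose $\Psi$ is already negative but small in magnitude) by a single extra element whose insertion flips the parity of at least one Mobius contribution, and then search over odd-prime labellings of the resulting $9$-element semilattice.

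Once a candidate labelling $S$ is found with $\Psi_{S,\frac{1}{N}}(x_i)=0$ for some $i$, the proof is finished by invoking Proposition~\ref{psi-lause}: the corresponding LCM matrix $[S]$ is singular, and by construction $S$ is an odd GCD-closed set, contradicting Conjecture~\ref{conj1}. I expect the verification step to be immediate once $S$ has been exhibited; all the real work lies in the search, and checking by hand (or by Sage) that the resulting $S$ is indeed GCD closed and that $\Psi_{S,\frac{1}{N}}$ vanishes at the designated element.
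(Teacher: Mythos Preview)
Your strategy is exactly the one the paper uses: disprove the conjecture by exhibiting a $9$-element odd GCD-closed set, obtained by augmenting the $8_{\rm J}$ structure with one extra element so that $\Psi_{S,\frac{1}{N}}$ vanishes at the top. The reasoning about why $|S|\geq 9$ is needed, and the idea of adding a vertex to shift a M\"obius value, are both on target.

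The gap is that a disproof by counterexample is only a proof once the counterexample is actually on the page. What you have written is a search plan; the substance of the argument is the explicit set and the single arithmetic verification. The paper carries out precisely your search and arrives at
\[
S=\{1,\,3,\,5,\,7,\,195,\,291,\,1407,\,4025,\,1020180525\},
\]
whose Hasse diagram is $8_{\rm J}$ with one additional coatom placed above $3$ (this raises $\mu_S(3,x_9)$ from $1$ to $2$). One then checks directly that
\[
\Psi_{S,\frac{1}{N}}(1020180525)=\frac{1}{1020180525}-\frac{1}{4025}-\frac{1}{1407}-\frac{1}{291}-\frac{1}{195}+\frac{1}{7}+\frac{1}{5}+\frac{2}{3}-1=0,
\]
and Proposition~\ref{psi-lause} gives the singularity of $[S]$. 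So your plan is correct and coincides with the paper's; what remains is simply to execute the search and record the resulting set.
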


However, this conjecture fails already when $n=9$.

\begin{theorem}\label{vastaesim}
Conjecture \ref{conj1} does not hold.
\end{theorem}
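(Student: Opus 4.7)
The plan is to exhibit an explicit counterexample, namely a $9$-element odd GCD closed set $S$ whose LCM matrix $[S]$ is singular. By Proposition \ref{psi-lause}, this reduces to constructing an odd GCD closed $S=\{x_1,\ldots,x_9\}$ for which $\Psi_{S,\frac{1}{N}}(x_i)=0$ for some $i$, i.e.\ the weighted reciprocal sum
\[
\sum_{x_k\,|\,x_i}\frac{\mu_S(x_k,x_i)}{x_k}=0.
\]
So the task is first to choose a meet semilattice structure on $9$ elements that admits such a cancellation, then to realize it by odd positive integers.

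The guiding observation is the computation performed just before Conjecture \ref{conj1}: in class $8_{\rm J}$, the odd-integer bounds $x_1 \ge 1$, $x_2/x_1, x_3/x_1, x_4/x_1 \ge 3,5,7$ force $\Psi_{S,\frac{1}{N}}(x_8)<0$ strictly. Hence no odd $8$-element counterexample exists, but with one additional element we gain an extra degree of freedom: either a new element can push the $\Psi$-value of the top element of an $8_{\rm J}$-like block up to zero, or a $9$-element structure with more branching can be used directly. I would search for a suitable $9$-element semilattice $(L,|)$ by modifying Remark \ref{re:sage}: enumerate $9$-element meet semilattices in Sage, restrict to those with an element $y$ whose M\"obius-weighted reciprocal sum is sign-indefinite, and then attempt a realization via small odd prime and prime-power assignments (taking meets to be GCDs and joins to be LCMs of the assigned integers).

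Once a candidate set $S=\{x_1,\ldots,x_9\}$ is produced, the verification is entirely mechanical and self-contained: (i) list the elements in a nondecreasing order compatible with divisibility; (ii) check that every pair $(x_i,x_j)$ again lies in $S$; (iii) recursively compute $\mu_S(x_k,x_i)$ from the definition recalled in Section 2; (iv) evaluate $\Psi_{S,\frac{1}{N}}(x_i)$ via \eqref{eq:psi} and exhibit the index $i$ at which it vanishes. Singularity of $[S]$ then follows immediately from Proposition \ref{psi-lause}, contradicting Conjecture \ref{conj1}.

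The main obstacle is purely computational rather than conceptual: the space of $9$-element meet semilattices is considerably larger than for $n=8$, and only a small fraction of them admit an odd-integer realization with $\Psi=0$. The delicate part is keeping the chosen integers small while still satisfying divisibility constraints and landing exactly on the zero of $\Psi_{S,\frac{1}{N}}$. Since $\Psi$ depends continuously on the $x_k^{-1}$ and the structural computation for $8_{\rm J}$ shows a strict sign, a small perturbation obtained by adjoining one well-chosen odd element is expected to suffice; this is the form the counterexample will take.
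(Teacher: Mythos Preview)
Your strategy matches the paper's: produce a $9$-element odd GCD closed set $S$ with $\Psi_{S,\frac{1}{N}}(x_9)=0$ and invoke Proposition~\ref{psi-lause}. The gap is that you never actually produce $S$. Everything you write is a description of a search procedure and a verification protocol, punctuated by phrases like ``I would search'', ``once a candidate set \ldots\ is produced'', and ``is expected to suffice''. For a theorem whose entire content is the existence of a counterexample, the proof \emph{is} the counterexample together with its verification; a plan to look for one is not a proof. Nothing in your heuristic argument (the sign analysis for $8_{\rm J}$ plus ``one extra degree of freedom'') guarantees that an odd realization with $\Psi=0$ exists, so the proposal as it stands does not establish the theorem.

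The paper simply writes down
\[
S=\{1,\,3,\,5,\,7,\,195,\,291,\,1407,\,4025,\,1020180525\},
\]
observes that $(S,|)$ is the $8_{\rm J}$ cube with one extra atom above $3$ adjoined (so $\mu_S(3,x_9)=2$ while the other M\"obius values are $\pm1$), and checks by a single integer computation that $\Psi_{S,\frac{1}{N}}(1020180525)=0$. Your outline would become a complete proof the moment you supply such a set and carry out steps (ii)--(iv) that you yourself listed; until then it is only a proposal.
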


\begin{proof}
Let us consider the odd set
\begin{align*}
S&=\{1,3,5,7,195,291,1407,4025,1020180525\}\\
&=\{1,3,5,7,3\cdot5\cdot13,3\cdot97,3\cdot7\cdot67,5^2\cdot7\cdot23,3\cdot5^2\cdot7\cdot13\cdot23\cdot67\cdot97\}.
\end{align*}
After calculating the values of the Möbius function (see Figure \ref{fig:pariton}) we may apply \eqref{eq:psi} to obtain
\begin{align*}
&\Psi_{S,\frac{1}{N}}(1020180525)=\frac{1}{1020180525}-\frac{1}{4025}-\frac{1}{1407}-\frac{1}{291}-\frac{1}{195}+\frac{1}{7}+\frac{1}{5}+\frac{2}{3}-1\\
&=\frac{1}{1020180525}\big(1-253461-725075-3505775-5231695+145740075\\
&\hspace{5.5cm}+204036105+680120350-1020180525\big)=0,
\end{align*}
and thus it follows from Proposition \ref{psi-lause} that the matrix $[S]$ is singular.
\end{proof}

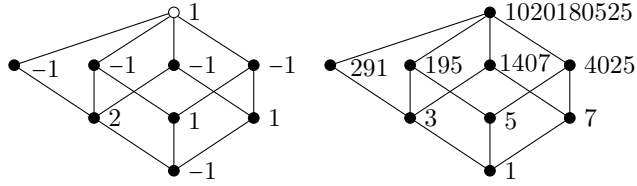
\begin{figure}[htb!]
\centering
\subfigure
{{\scalefont{0.8}
\begin{tikzpicture}[scale=0.7]
\draw (1,0)--(-0.5,1)--(-0.5,2)--(0.92,2.92);
\draw (1,0)--(1,1)--(-0.5,2);
\draw (-0.5,1)--(1,2)--(1,2.9);
\draw (1,0)--(2.5,1)--(2.5,2)--(1.08,2.92);
\draw (1,1)--(2.5,2);
\draw (-0.5,1)--(-2,2)--(0.91,2.98);
\draw (2.5,1)--(1,2);
\draw [fill] (-0.5,1) circle [radius=0.1];
\draw [fill] (1,1) circle [radius=0.1];
\draw [fill] (2.5,1) circle [radius=0.1];
\draw [fill] (1,0) circle [radius=0.1];
\draw [fill] (-0.5,2) circle [radius=0.1];
\draw [fill] (1,2) circle [radius=0.1];
\draw [fill] (2.5,2) circle [radius=0.1];
\draw [fill] (-2,2) circle [radius=0.1];
\draw (1,3) circle [radius=0.1];
\node [right] at (1.1,0) {$-1$};
\node [right] at (-0.4,1) {$2$};
\node [right] at (1.1,0.95) {$1$};
\node [right] at (2.6,1) {$1$};
\node [right] at (-0.4,2) {$-1$};
\node [right] at (1.1,2) {$-1$};
\node [right] at (2.6,2) {$-1$};
\node [right] at (-1.85,1.92) {$-1$};
\node [right] at (1.1,3) {$1$};
\end{tikzpicture}}
\subfigure
{{\scalefont{0.8}
\begin{tikzpicture}[scale=0.7]
\draw (1,0)--(-0.5,1)--(-0.5,2)--(0.92,2.92);
\draw (1,0)--(1,1)--(-0.5,2);
\draw (-0.5,1)--(1,2)--(1,2.9);
\draw (1,0)--(2.5,1)--(2.5,2)--(1.08,2.92);
\draw (1,1)--(2.5,2);
\draw (-0.5,1)--(-2,2)--(1,3);
\draw (2.5,1)--(1,2);
\draw [fill] (-0.5,1) circle [radius=0.1];
\draw [fill] (1,1) circle [radius=0.1];
\draw [fill] (2.5,1) circle [radius=0.1];
\draw [fill] (1,0) circle [radius=0.1];
\draw [fill] (-0.5,2) circle [radius=0.1];
\draw [fill] (1,2) circle [radius=0.1];
\draw [fill] (2.5,2) circle [radius=0.1];
\draw [fill] (-2,2) circle [radius=0.1];
\draw [fill] (1,3) circle [radius=0.1];
\node [right] at (1.1,0) {$1$};
\node [right] at (-0.4,1) {$3$};
\node [right] at (1.1,0.95) {$5$};
\node [right] at (2.6,1) {$7$};
\node [right] at (-0.4,2) {$195$};
\node [right] at (1,2.05) {$1407$};
\node [right] at (2.6,2) {$4025$};
\node [right] at (-1.8,1.95) {$291$};
\node [right] at (1.1,3) {$1020180525$};
\end{tikzpicture}}
}}
\caption{The Hasse diagram of the counterexample of Theorem \ref{vastaesim}. The left figure shows the values $\mu_S(x_i,x_9)$, the right shows the respective elements of $S$.}\label{fig:pariton}
\end{figure}\noindent

A positive integer $x$ is said to be \emph{a singular number} if there exists a GCD closed set $S=\{x_1,\ldots,x_n\}$, where $1\leq x_1<\cdots<x_n=x$, such that $\Psi_{S,\frac{1}{N}}(x)=0$. Otherwise $x$ is a \emph{nonsingular number}. Moreover, $x$ is a \emph{primitive singular number} if $x$ is singular and $x'$ is nonsingular number for all $x'\,|\,x$, $x'\neq x$.

Hong \cite{Ho3} conjectured that there are infinitely many even primitive singular numbers. He has also presented the following conjecture about odd primitive singular numbers.

\begin{conjecture}[\cite{Ho3}, Conjecture 4.3]\label{conj2}
There does not exist an odd primitive singular number.
\end{conjecture}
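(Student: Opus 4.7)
The plan is to disprove Conjecture \ref{conj2} by combining the counterexample of Theorem \ref{vastaesim} with a short minimality argument. The key observation is that the very existence of an odd singular number forces the existence of an odd primitive singular number, because the primitivity condition is defined by descending along divisors and every divisor of an odd integer is odd.

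Concretely, by Theorem \ref{vastaesim} the integer $1020180525$ is an odd singular number. Consider
\[
D=\{d\in\Zset_+\ :\ d\,|\,1020180525\text{ and }d\text{ is a singular number}\}.
\]
The set $D$ is finite and nonempty (it contains $1020180525$), so I can pick $x_0\in D$ that is minimal with respect to divisibility. Every proper divisor $x'$ of $x_0$ also divides $1020180525$, so in particular $x'$ is odd; and by minimality of $x_0$ in $D$ we must have $x'\notin D$, i.e.\ $x'$ is a nonsingular number. Hence $x_0$ is itself singular while all its proper divisors are nonsingular, so $x_0$ is an odd primitive singular number, contradicting Conjecture \ref{conj2}.

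The main obstacle is not logical but computational: the argument guarantees the existence of $x_0$ without exhibiting it. To pin $x_0$ down explicitly one would need to examine the divisors of $1020180525=3\cdot 5^2\cdot 7\cdot 13\cdot 23\cdot 67\cdot 97$ and decide, for each divisor $d$, whether there exists a GCD closed set $S$ with $\max S=d$ and $\Psi_{S,\frac{1}{N}}(d)=0$. A systematic Sage search in the spirit of Remark \ref{re:sage} could in principle locate an explicit odd primitive singular number, and it would be natural for the paper to complement the pure existence argument above with such a concrete witness; however, for the sole purpose of disproving Conjecture \ref{conj2}, the minimality argument is already conclusive.
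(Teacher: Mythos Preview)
Your proposal is correct and follows essentially the same approach as the paper: both disprove the conjecture by observing that $1020180525$ is an odd singular number (from Theorem~\ref{vastaesim}) and then arguing that either it is already primitive or else some proper (hence odd) divisor must be an odd primitive singular number. Your version is just a slightly more formal rendering of the paper's one-line minimality argument.
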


The counterexample found in the proof of Theorem \ref{vastaesim} also implies that neither Conjecture \ref{conj2} is true.

\begin{corollary}
There exists an odd primitive singular number.
\end{corollary}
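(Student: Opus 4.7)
The plan is to derive the corollary from Theorem~\ref{vastaesim} by a finite-descent argument on divisibility, without having to identify the primitive singular number explicitly. The essential observation is that every divisor of an odd integer is odd, so the problem reduces to exhibiting some odd singular number and then descending within its divisors to a primitive one.

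First I would invoke Theorem~\ref{vastaesim}: the odd integer
\[
x_0 = 1020180525 = 3\cdot 5^2\cdot 7\cdot 13\cdot 23\cdot 67\cdot 97
\]
is a singular number. Next I would consider the set
\[
D = \{d \in \Zset_+ : d \mid x_0 \text{ and } d \text{ is singular}\},
\]
which is nonempty (it contains $x_0$) and finite (it is contained in the divisor set of $x_0$). Let $x^{\ast}$ be a minimal element of $D$ with respect to the divisibility order on $D$.

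By the very choice of $x^{\ast}$, any proper divisor $x'$ of $x^{\ast}$ satisfies $x' \mid x_0$ and $x' \neq x^{\ast}$, so $x'$ cannot lie in $D$ without contradicting minimality; that is, $x'$ is nonsingular. Hence $x^{\ast}$ is singular while all its proper divisors are nonsingular, which is precisely the definition of a primitive singular number. Since $x^{\ast} \mid x_0$ and $x_0$ is odd, $x^{\ast}$ is odd as well, establishing the existence claim.

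There is essentially no obstacle once Theorem~\ref{vastaesim} is in hand. The only point worth flagging is that this argument does not locate the primitive singular number $x^{\ast}$ explicitly — doing so would require checking the singularity of each divisor of $x_0$ by exhibiting or ruling out suitable GCD-closed sets topped by that divisor, a nontrivial computation — but the finite descent gives the existence statement of the corollary with no further work.
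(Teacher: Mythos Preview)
Your argument is correct and is essentially the same as the paper's: both start from Theorem~\ref{vastaesim} to obtain the odd singular number $1020180525$, and then pass to a divisibility-minimal singular divisor to obtain an odd primitive singular number. Your version merely makes the finite-descent step explicit, whereas the paper compresses it into a single sentence.
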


\begin{proof}
By the proof of Theorem \ref{vastaesim} we know that $1020180525$ is an odd singular number. If it is not primitive singular number itself, then it has a nontrivial factor which is an odd primitive singular number.
\end{proof}

\section{Lattice-theoretic approach to singularity of power LCM matrices with real exponent}

So far we have only been studying the singularity of the usual LCM matrices. Next we consider singularity of power LCM matrices from lattice-theoretic viewpoint. The one thing that we can be sure of is that it is difficult to find singular power LCM matrices in which the exponent is an integer greater than $1$. Thus it is only natural to ask how this situation changes when the exponent is allowed to be any positive \emph{real} number. It turns out that in some cases already the semilattice structure of $(S,|)$ tells a lot about the singularity of power LCM matrices of $S$. We begin our study with two illustrative examples.

\begin{example}\label{ex:chain}
Let $L=\{z_1,z_2,\ldots,z_n\}$ be a chain with $z_1\prec z_2\prec\cdots\prec z_n$ (see Figure \ref{fig:esimerkit} (a)), let $\alpha$ be any positive real number and let $S$ be any set of positive integers such that $(S,|)\cong(L,\preceq)$. Then by \eqref{eq:psi} we get $$\Psi_{{S},\frac{1}{N^\alpha}}(x_1)=\frac{\mu_{S}(x_1,x_1)}{x_1^\alpha}=\frac{1}{x_1^\alpha}>0,$$ and for $1<i\leq n$ we have
$$\Psi_{{S},\frac{1}{N^\alpha}}(x_i)=\frac{1}{x_{i}^\alpha}-\frac{1}{x_{i-1}^\alpha}<0.$$
Thus the power LCM matrix $[S]_{N^\alpha}=[\mathrm{lcm}(x_i,x_j)^\alpha]$ is invertible for all $\alpha>0$.
\end{example}

\begin{figure}[ht!]
\centering
\subfigure[Ex. \ref{ex:chain}]
{{
\begin{tikzpicture}[scale=0.7]
\draw (0,1)--(0,2.5);
\draw (0,3.5)--(0,4);
\draw [fill] (0,1) circle [radius=0.1];
\draw [fill] (0,2) circle [radius=0.1];
\draw [fill] (0,4) circle [radius=0.1];
\node [right] at (0,1) {$z_1$};
\node [right] at (0,2) {$z_2$};
\node at (0,3) {$\vdots$};
\node [right] at (0,4) {$z_n$};
\node [right] at (1.5,2) {\ };
\node [right] at (-1.5,2) {\ };
\end{tikzpicture}}
}
\qquad
\qquad
\qquad
\qquad
\subfigure[Ex. \ref{ex:timantti}]
{{
\begin{tikzpicture}[scale=0.7]
\draw (1,0)--(0.25,1)--(0.92,1.92);
\draw (1,0)--(1.75,1)--(1.08,1.92);
\draw [fill] (0.25,1) circle [radius=0.1];
\draw [fill] (1.75,1) circle [radius=0.1];
\draw [fill] (1,0) circle [radius=0.1];
\draw [fill] (1,2) circle [radius=0.1];
\node [right] at (1,0) {$z_1$};
\node [right] at (0.25,1) {$z_2$};
\node [right] at (1.75,1) {$z_3$};
\node [right] at (1,2) {$z_4$};
\end{tikzpicture}}
}
\caption{The Hasse diagrams of the semilattices in Examples \ref{ex:chain} and \ref{ex:timantti}.}\label{fig:esimerkit}
\end{figure}
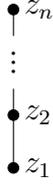
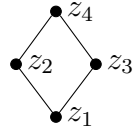

\begin{example}\label{ex:timantti}
Let $(L,\preceq)$ be the four element meet semilattice presented in Figure \ref{fig:esimerkit} (b). Suppose that $S=\{x_1,x_2,x_3,x_4\}=\{1,3,5,45\}$. Clearly $(S,|)\cong(L,\preceq)$. Let $\alpha$ be any positive real number. Applying \eqref{eq:psi} we obtain
\[
\Psi_{{S},\frac{1}{N^\alpha}}(1)=1,\quad \Psi_{{S},\frac{1}{N^\alpha}}(3)=\frac{1}{3^\alpha}-1\quad\text{and}\quad\Psi_{{S},\frac{1}{N^\alpha}}(5)=\frac{1}{5^\alpha}-1,
\]
which are all nonzero for all $\alpha>0$. However,
\[
\Psi_{{S},\frac{1}{N^\alpha}}(45)=\frac{1}{45^\alpha}-\frac{1}{5^\alpha}-\frac{1}{3^\alpha}+1,
\]
which is negative for $\alpha=\frac{1}{4}$ and positive for $\alpha=1$. Since $\Psi_{{S},\frac{1}{N^\alpha}}(45)$ is a continuous function of variable $\alpha$, this function must have zero value for some positive $\alpha_0$ (this $\alpha_0$ is located approximately at $0.328594$). It now follows from Proposition \ref{psi-lause} that the power LCM matrix $[S]_{N^{\alpha_0}}=[[x_i,x_j]^{\alpha_0}]$ is singular. This shows that our structure $(L,\preceq)$ does not possess the same property as chains were proven to have in our previous example.

Although we just found one set $S$ that yields a singular power LCM matrix for some positive real number $\alpha$, not every set of positive integers isomorphic to $(L,\preceq)$ has this property. To see this we only need to choose $S'=\{x_1',x_2',x_3',x_4'\}=\{1,3,5,15\}$. In this case we have
\[
\Psi_{{S'},\frac{1}{N^\alpha}}(i)=\Psi_{{S},\frac{1}{N^\alpha}}(i)\neq0\quad\text{for all}\ \alpha>0\ \text{and for all}\ i=1,2,3,
\]
but also
\[
\Psi_{{S'},\frac{1}{N^\alpha}}(15)=\frac{1}{15^\alpha}-\frac{1}{5^\alpha}-\frac{1}{3^\alpha}+1=\frac{1}{15^\alpha}(5^\alpha-1)(3^\alpha-1)\neq0
\]
for all $\alpha>0$. This means that the power LCM matrix $[S']_{N^{\alpha}}=[[x_i',x_j']^{\alpha}]$ is nonsingular for all $\alpha>0$.
\end{example}

As we saw in Example \ref{ex:chain}, sometimes the lattice-theoretic structure of $(S,|)$ alone tells us that the power LCM matrix of the set $S$ is invertible for all $\alpha>0$. On the other hand, Example \ref{ex:timantti} shows that in the remaining cases the information about the structure of $(S,|)$ is inconclusive and does not reveal whether or not all the power LCM matrices of the set $S$ are invertible. In this section our ultimate goal is to characterize all possible meet semilattices $(L,\preceq)$, whose structure is strong enough to guarantee the invertibility of the power LCM matrix for all GCD closed set $(S,|)\cong(L,\preceq)$ and for all $\alpha>0$. By making use of Lemma \ref{Möbius0} we are able to prove the following result, which brings us one step closer to achieving this aim.

\begin{theorem}\label{0-lause}
Let $(L,\preceq)$ be a meet semilattice with $n$ elements. Assume that there exist elements $x,y_1,\ldots,y_m\ (m\geq2)$ in $L$ such that $x$ covers $y_1,\ldots,y_m$ and $\mu_L(y,x)>0$, where $y=y_1\wedge\cdots\wedge y_k$. Then there exists a set $S=\{x_1,x_2,\ldots,x_n\}$ of positive integers and a positive real number $\alpha_0$ such that $(S,|)\cong (L,\preceq)$ and the power LCM matrix $[S]_{N^{\alpha_0}}=[[x_i,x_j]^{\alpha_0}]$ of the set $S$ is singular.
\end{theorem}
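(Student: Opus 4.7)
The plan is to fix the element $x\in L$ and, by choosing $S\cong L$ with one scalar degree of freedom, force the function $\alpha\mapsto\Psi_{S,\frac{1}{N^\alpha}}(n(x))$ to dip strictly below zero for small positive $\alpha$ while remaining positive for large $\alpha$, then invoke the intermediate value theorem. The reduction begins with Lemma \ref{M�bius0}: since $y=y_1\wedge\cdots\wedge y_m=\xi_L(x)$, the M\"obius values $\mu_L(z,x)$ are supported on $\boldsymbol{\lsem}y,x\boldsymbol{\rsem}$, so only the integers assigned to elements of this interval enter the analysis.

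For the construction I would use the standard prime embedding, assigning a distinct prime $p_z$ to each $z\in L$ and fixing one further auxiliary prime $p$ not among the $p_z$. Put $\tilde n(z)=\prod_{w\preceq z}p_w$ and define $n(z)=\tilde n(z)$ if $z\not\succeq x$, and $n(z)=p^k\tilde n(z)$ if $z\succeq x$, where $k$ is a positive integer to be chosen at the end. A case analysis on whether $z_1\wedge z_2$ lies in the principal filter of $x$ shows that $S=\{n(z):z\in L\}$ is GCD closed and $(S,|)\cong(L,\preceq)$; the key point is that $p$ divides $\tilde n(w)$ for no $w$, and that the principal filter of $x$ is closed under meets. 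Within $\boldsymbol{\lsem}y,x\boldsymbol{\rsem}$ only the integer $n(x)$ carries the factor $p^k$, so increasing $k$ inflates $n(x)$ while leaving every other $n(z)$ fixed.

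Write $\Psi(\alpha):=\Psi_{S,\frac{1}{N^\alpha}}(n(x))=\sum_{z\in\boldsymbol{\lsem}y,x\boldsymbol{\rsem}}\mu_L(z,x)\,n(z)^{-\alpha}$. Three observations close the argument. First, as $\alpha\to 0^+$, $\Psi(\alpha)\to\sum_{z\in\boldsymbol{\lsem}y,x\boldsymbol{\rsem}}\mu_L(z,x)=\delta_L(y,x)=0$ because $y\neq x$. Second, $n(y)$ is strictly the smallest integer in the interval, so $n(y)^\alpha\Psi(\alpha)\to\mu_L(y,x)>0$ as $\alpha\to\infty$ and $\Psi$ is eventually strictly positive. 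Third, the one-sided derivative $\Psi'(0^+)=-\sum_z\mu_L(z,x)\ln n(z)$ decomposes as $-k\ln p+C_L$, with $C_L$ a constant independent of $k$; choosing $k$ large enough forces $\Psi'(0^+)<0$, and combined with $\Psi(0^+)=0$ this yields $\Psi(\alpha)<0$ for all sufficiently small $\alpha>0$. Continuity of $\Psi$ on $(0,\infty)$ together with the intermediate value theorem then produce some $\alpha_0>0$ with $\Psi(\alpha_0)=0$, and Proposition \ref{psi-lause} delivers the singularity of $[S]_{N^{\alpha_0}}$. I expect the only delicate step to be the GCD-closedness check after the $p^k$-inflation, which reduces to the observation above about the principal filter; the M\"obius-inversion identity at $\alpha\to 0^+$ is immediate once Lemma \ref{M�bius0} is in place.
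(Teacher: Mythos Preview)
Your proposal is correct and follows essentially the same route as the paper's own proof: embed $L$ into $(\Zset_+,|)$ via the standard prime construction, inflate the elements of the principal filter of $x$ by a large prime power, use Lemma~\ref{M�bius0} to localize $\Psi_{S,\frac{1}{N^\alpha}}(n(x))$ to the interval $\boldsymbol{\lsem}y,x\boldsymbol{\rsem}$, and then run the three-step calculus argument (value $0$ at $\alpha=0$, negative derivative there for large exponent, positive limit as $\alpha\to\infty$) to invoke the intermediate value theorem and Proposition~\ref{psi-lause}. The only cosmetic difference is that the paper reuses the prime $p_i$ already attached to $x$ for the inflation factor $p_i^r$, whereas you introduce a fresh auxiliary prime $p$; both choices work for exactly the reason you give, namely that within $\boldsymbol{\lsem}y,x\boldsymbol{\rsem}$ only $n(x)$ picks up the extra factor.
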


\begin{proof}
Let us denote $L=\{z_1,\ldots,z_n\}$, where $z_i\preceq z_j\Rightarrow i\leq j$ (in particular, $z_1=\min L$). We begin by constructing a GCD closed set $S'=\{x'_1,x'_2,\ldots,x'_n\}$ of positive integers such that $(S',|)\cong (L,\preceq)$. Let $p_2,p_3,\ldots,p_n$ be distinct prime numbers. We define $x_1'=1$ and 
\[
x_i'=p_i\mathrm{lcm}\{x_j'\,\big|\,j<i\text{ and }z_j\preceq z_i\}=\prod_{\substack{1\leq j\leq i\\z_j\preceq z_i}}p_j
\]
for $1<i\leq n$. It is easy to see that the set $S'$ is both GCD closed and isomorphic to $L$ (every element of $S'$ is either $1$ or a squarefree product of different primes).

Now suppose that $x_i'\in S'$ is an element such that it covers the elements $x_{i_1}',x_{i_2}',\ldots,x_{i_m}'\in S'$ and $\mu_{S'}(x_k',x_i')>0$, where $x_k'=x_{i_1}'\wedge x_{i_2}'\wedge\cdots\wedge x_{i_m}'$. Let $r$ be an arbitrary positive integer. Now let $S(r)=\{x_1,x_2,\ldots,x_n\}$, where
\[
x_j=
\left\{ \begin{array}{ll}
x_j' & \textrm{if}\ x_i'\nmid x_j',\\
p_i^rx_j' & \textrm{if}\ x_i'\,|\,x_j'.\\
\end{array}\right.
\]
Clearly $(S(r),|)\cong (S',|)\cong (L,\preceq)$.

Let $i$ be as fixed above. Then $x_i=p_i^rx_i'$. Let $r$ be sufficiently large (to be specified later). We define the function $h_{i,r}:\Rset\to\Rset$ by
\[
h_{i,r}(\alpha)=\Psi_{{S(r)},\frac{1}{N^\alpha}}(x_i)=\sum_{j=1}^i \frac{\mu_{S(r)}(x_j,x_i)}{x_j^\alpha}.
\]
By Lemma \ref{Möbius0} we know that $\mu_{S(r)}(x_j,x_i)=0$ for all $x_j\not\in\boldsymbol{\lsem}x_k,x_i\boldsymbol{\rsem}$. Thus the function $h_{i,r}$ comes to the form
\begin{align*}
h_{i,r}(\alpha)&=\sum_{x_k\,|\,x_j\,|\,x_i} \frac{\mu_{S(r)}(x_j,x_i)}{x_j^\alpha}=\frac{1}{x_k^\alpha}\sum_{a\,|\,\frac{x_i}{x_k}} \frac{\mu_{S(r)}(ax_k,x_i)}{a^\alpha}\\&=\frac{1}{x_k^\alpha}\left(\mu_{S(r)}(x_k,x_i)+\sum_{1\neq a\,|\,\frac{x_i}{x_k}} \frac{\mu_{S(r)}(ax_k,x_i)}{a^\alpha}\right).
\end{align*}
We are going to show that the factor on the right goes to zero for some $\alpha$. Here we have
\[
\lim_{\alpha\to\infty}(x_k^\alpha (h_{i,r}(\alpha))=\mu_{S(r)}(x_k,x_i)+\lim_{\alpha\to\infty}\sum_{1\neq a\,|\,\frac{x_i}{x_k}} \underbrace{\frac{\mu_{S(r)}(ax_k,x_i)}{a^\alpha}}_{\to 0\ \text{as}\ \alpha\to\infty}=\mu_{S(r)}(x_k,x_i)>0.
\]
The definition of the Möbius function $\mu_{S(r)}$ implies that
\[
x_k^0(h_{i,r}(0))=\sum_{x_k\,|\,x_j\,|\,x_i} \mu_{S(r)}(x_j,x_i)=\delta_{S(r)}(x_k,x_i)=0,
\]
since $x_k\neq x_i$. In addition,
\begin{align*}
&\frac{d(x_k^\alpha h_{i,r}(\alpha))}{d\alpha}=\sum_{1\neq a\,|\,\frac{x_i}{x_k}} -\log(a)\frac{\mu_{S(r)}(ax_k,x_i)}{a^\alpha}\\
&=\left(\sum_{\substack{a\,|\,\frac{x_i}{x_k}\\a\neq1,\frac{x_i}{x_k}}}-\log(a)\frac{\mu_{S(r)}(ax_k,x_i)}{a^\alpha}\right)-r\log(p_i)\log\left(\frac{x_i'}{x_k}\right)\frac{\mu_{S(r)}(x_i,x_i)}{\left(\frac{x_i}{x_k}\right)^\alpha}.
\end{align*}
Thus when the integer $r$ is sufficiently large, we have
\begin{align*}
\frac{d(x_k^\alpha h_{i,r}(\alpha))}{d\alpha}(0)=\sum_{\substack{a\,|\,\frac{x_i}{x_k}\\a\neq1,\frac{x_i}{x_k}}}&-\log(a)\mu_{S(r)}(ax_k,x_i)\\ &-r\log(p_i)\underbrace{\log\left(\frac{x_i'}{x_k}\right)}_{>0}\underbrace{\mu_{S(r)}(x_k,x_i)}_{>0}<0.
\end{align*}
Thus the function $x_k^\alpha h_{i,r}(\alpha)$ obtains negative values for some positive $\alpha$. In addition, $x_k^\alpha h_{i,r}(\alpha)$ is continuous. Now it follows from Bolzano's Theorem that there exists $\alpha_0\in]0,\infty[$ such that $x_k^{\alpha_0} h_{i,r}(\alpha_0)=0$ and therefore $h_{i,r}(\alpha_0)=\Psi_{{S(r)},\frac{1}{N^\alpha}}(x_i)=0$. Proposition \ref{psi-lause} now implies the matrix $[S(r)]_{N^{\alpha_0}}$ has to be singular.
\end{proof}

A subset $S$ of a meet semilattice is said to be \emph{a $\wedge$-tree set} if the Hasse diagram of the meet closure of $S$ is a tree (when considered as an undirected graph). An alternative way of putting this is that every element of the meet closure of $S$ covers at most one element of $\mathrm{meetcl}(S)$ (see \cite[Lemma 4.1]{MH14} for further characterizations). If the set $S$ is meet closed, then $S$ is a $\wedge$-tree set if and only if every element of $S$ covers at most one element of $S$.

Now we are finally in a position to prove the following theorem, which gives us the desired classification of finite meet semilattices.

\begin{theorem}\label{ehtolause}
Let $(L,\preceq)$ be a meet semilattice with $n$ elements, where $L=\{z_1,z_2,\ldots,z_n\}$. Then the following conditions are equivalent:
\begin{enumerate}
\item The LCM matrix $([x_i,x_j]^\alpha)$ is nonsingular for all $\alpha>0$ and for all sets $S=\{x_1,x_2,\ldots,x_n\}\subset\Zset^+$ such that $(S,|)\cong (L,\preceq)$. 
\item $L$ is $\wedge$-tree set.
\item For all $z_i,z_j\in L$
\[
\mu_L(z_i,z_j)>0\Rightarrow z_i=z_j.
\]
\end{enumerate}
\end{theorem}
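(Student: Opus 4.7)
The plan is to close the cycle $(2)\Rightarrow(1)\Rightarrow(3)\Rightarrow(2)$. The implication $(2)\Rightarrow(1)$ is immediate: if $L$ is a $\wedge$-tree, then every $z_j\in L$ covers at most one element, so by Lemma~\ref{Möbius0} the only nonzero terms in \eqref{eq:psi} for $x_j\in S$ are the one at $x_j$ itself and, when $z_j$ has a predecessor $y_j$, the term at $y_j$ with Möbius value $-1$. Hence $\Psi_{S,\frac{1}{N^\alpha}}(x_j)$ equals either $1/x_j^\alpha>0$ or $1/x_j^\alpha-1/y_j^\alpha<0$, and Proposition~\ref{psi-lause} yields nonsingularity for every $\alpha>0$.

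For $(1)\Rightarrow(3)$ I argue contrapositively and invoke Theorem~\ref{0-lause}. Let $z_i\neq z_j$ satisfy $\mu_L(z_i,z_j)>0$. By the strengthened form of Lemma~\ref{Möbius0} recorded in Remark~\ref{re:möbius0}, the element $z_i$ is the meet of those elements of $\boldsymbol{\lsem}z_i,z_j\boldsymbol{\rsem}$ that are covered by $z_j$. If there were only one such element the interval would be a chain and $\mu_L(z_i,z_j)$ would equal $-1$; hence $z_j$ covers at least two such elements, say $y_1,\ldots,y_k$ with $k\ge 2$ and $z_i=y_1\wedge\cdots\wedge y_k$. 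Extending to the full list $y_1,\ldots,y_m$ ($m\ge k$) of elements covered by $z_j$ in $L$ places us exactly in the hypothesis of Theorem~\ref{0-lause}, which produces $S\cong L$ and $\alpha_0>0$ for which $[S]_{N^{\alpha_0}}$ is singular, violating $(1)$.

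The main obstacle is $(3)\Rightarrow(2)$, proved by contraposition. Assume some $x\in L$ covers $y_1,\ldots,y_m$ with $m\ge 2$ and set $y:=y_1\wedge\cdots\wedge y_m$; I claim some $v\neq x$ in $\boldsymbol{\lsem}y,x\boldsymbol{\rsem}$ satisfies $\mu_L(v,x)>0$, and will prove this by induction on $|\boldsymbol{\lsem}y,x\boldsymbol{\rsem}|$. In the base case $\boldsymbol{\lsem}y,x\boldsymbol{\rsem}=\{y,y_1,\ldots,y_m,x\}$, the identity $\sum_{v}\mu_L(v,x)=0$ with $\mu_L(x,x)=1$ and $\mu_L(y_i,x)=-1$ yields $\mu_L(y,x)=m-1\ge 1$. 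For the inductive step, split on whether some pair satisfies $y_i\wedge y_j\succ y$. If so, the strictly smaller interval $\boldsymbol{\lsem}y_i\wedge y_j,x\boldsymbol{\rsem}$ still has its top covering at least two coatoms (including $y_i,y_j$) whose meet is the interval minimum, and the inductive hypothesis furnishes $v$; here $\mu_L(v,x)$ agrees with the value computed inside the subinterval because Möbius values are intrinsic to intervals. Otherwise every pair satisfies $y_i\wedge y_j=y$, and for any $v\in\boldsymbol{\lsem}y,x\boldsymbol{\rsem}\setminus\{y,y_1,\ldots,y_m,x\}$ Remark~\ref{re:möbius0} forces $\mu_L(v,x)=0$, because $\bigwedge\{y_k:v\preceq y_k\}$ is either a single $y_k$ or $y$, never $v$; the base-case computation again gives $\mu_L(y,x)=m-1>0$.

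The delicate point is the second branch of the inductive step, where the sharper version of Lemma~\ref{Möbius0} in Remark~\ref{re:möbius0} is needed to exclude stray elements of $\boldsymbol{\lsem}y,x\boldsymbol{\rsem}$ from the support of $\mu_L(\cdot,x)$. Everything else is routine verification: checking that the sub-intervals used in the induction inherit the hypotheses, and using the standard fact that Möbius values of bounded intervals are intrinsic.
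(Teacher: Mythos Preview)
Your proof is correct and runs the equivalence cycle in the opposite direction from the paper: you prove $(2)\Rightarrow(1)\Rightarrow(3)\Rightarrow(2)$, whereas the paper proves $(1)\Rightarrow(2)\Rightarrow(3)\Rightarrow(1)$. Your $(2)\Rightarrow(1)$ is essentially the paper's $(3)\Rightarrow(1)$ (both amount to observing that each $\Psi_{S,\frac{1}{N^\alpha}}(x_i)$ has at most two nonzero terms, of opposite sign). Your $(1)\Rightarrow(3)$ is the short step once Theorem~\ref{0-lause} and Remark~\ref{re:m�bius0} are available; the phrase ``extending to the full list $y_1,\ldots,y_m$'' is unnecessary and mildly misleading, since Theorem~\ref{0-lause}---as the paper itself applies it in its $(1)\Rightarrow(2)$---only needs \emph{some} $m\ge 2$ elements covered by $x$ whose meet $y$ satisfies $\mu_L(y,x)>0$, and you already have exactly this with $y_1,\ldots,y_k$ and $y=z_i$. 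Passing to the full list would instead require $\mu_L(\xi_L(z_j),z_j)>0$, which you have not checked.

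The genuine difference lies in where the combinatorial work ``not a $\wedge$-tree $\Rightarrow$ some off-diagonal M\"obius value is positive'' is done. The paper embeds this inside $(1)\Rightarrow(2)$: starting from an element $z_i$ covering $k\ge 2$ elements, it either has $\mu_L(\xi_L(z_i),z_i)>0$ directly, or locates a suitable $z_j$ in the interval via the identity $\sum_{z_r\preceq z_j\prec z_i}\mu_L(z_r,z_j)\ge 0$ together with Lemma~\ref{M�bius0}. You instead place this work in $(3)\Rightarrow(2)$ and argue by induction on $|\boldsymbol{\lsem}y,x\boldsymbol{\rsem}|$, splitting on whether some pairwise meet $y_i\wedge y_j$ sits strictly above $y$ (reduce to a smaller interval) or all pairwise meets collapse to $y$ (use Remark~\ref{re:m�bius0} to kill the stray M\"obius values and compute $\mu_L(y,x)=m-1$). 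Both arguments are valid; yours is a clean self-contained induction, while the paper's is shorter but leans more on the summation identity. Correspondingly, the paper's $(2)\Rightarrow(3)$ is a one-liner (intervals in a $\wedge$-tree are chains), whereas your reversed implication carries the load.
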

\begin{proof}
$(1)\Rightarrow(2)$ First we assume Condition 1. Suppose for a contradiction that for some $i$, $z_i$ covers elements $z_{i_1},\ldots,z_{i_k}\in L$, where $k\geq 2$. Let $z_r=z_{i_1}\wedge\cdots\wedge z_{i_k}$. If $\mu_L(z_r,z_i)>0$, then Theorem \ref{0-lause} would imply that the matrix $([x_i,x_j]^\alpha)$ is singular for some $\alpha>0$ and $S\subset\Zset_+$, where $(S,|)\cong(L,\preceq)$. Thus we have
\[
\mu_L(z_r,z_i)=-\sum_{z_r\preceq z_j\prec z_i}\mu_L(z_r,z_j)\leq 0.
\]
Let $z_{l_1},\ldots,z_{l_m}\in L$ be the elements that cover $z_r$. Here $m\geq2$, since otherwise we would have $z_{l_1}\preceq z_{i_1},\ldots, z_{i_k}$ and further $z_r\prec z_{l_1}\preceq z_{i_1}\wedge\cdots\wedge z_{i_k}$. So we know that the terms $\mu_L(z_r,z_{l_1}),\ldots,\mu_L(z_r,z_{l_m})$ appear in the nonnegative sum
\begin{align*}
0\leq&\sum_{z_r\preceq z_j\prec z_i}\mu_L(z_r,z_j)\\
&=\mu_L(z_r,z_r)+\mu_L(z_r,z_{l_1})+\cdots+\mu_L(z_r,z_{l_m})+\sum_{z_{l_1},\ldots,z_{l_m}\prec z_j\prec z_i}\mu_L(z_r,z_j)\\
&=1-m+\sum_{z_{l_1},\ldots,z_{l_m}\prec z_j\prec z_i}\mu_L(z_r,z_j).
\end{align*}
Therefore there exists $z_j\succ z_r$ such that $\mu_L(z_r,z_j)>0$. Let $z_{k_1},\ldots,z_{k_t}$ be the elements that are covered by $z_j$ and are located on the interval $\boldsymbol{\lsem}z_r,z_i\boldsymbol{\rsem}$. Since $z_r$ is a lower bound for the elements $z_{k_1},\ldots,z_{k_t}$, we have $z_r\preceq z_{k_1}\wedge\cdots\wedge z_{k_t}$. Furhermore, $z_r$ has to be equal the meet of these elements covered by $z_j$, or otherwise we would have $\mu_L(z_r,z_j)=0$ by Lemma \ref{Möbius0}. Thus it follows from Theorem \ref{0-lause} that there exists set $S$ such that $(S,|)\cong(L,\preceq)$ and the power LCM matrix $([x_i,x_j]^\alpha)_{ij}$ of the set $S$ is singular for some $\alpha>0$. This is a contradiction with Condition 1.

$(2)\Rightarrow(3)$ Suppose then that Condition 2 holds. Let $z_i,z_j\in L$ with $\mu_L(z_i,z_j)>0$. Here we must have $z_i\preceq z_j$. Since $S$ is $\wedge$-tree set, the interval $\boldsymbol{\lsem}z_i,z_j\boldsymbol{\rsem}$ is a chain (see \cite[Lemma 4.1]{MH14}). In addition, the interval $\boldsymbol{\lsem}z_i,z_j\boldsymbol{\rsem}$ cannot have more than one element on it, since otherwise we would have $\mu_L(z_i,z_j)=-1$ (in the case when there are two elements on the interval) or $\mu_L(z_i,z_j)=0$ (in the case when there are more than two elements on the interval). Thus Condition 3 is satisfied.

$(3)\Rightarrow(1)$ For the last we assume Condition 3. Let $S$ be any subset of positive integers such that $(S,|)\cong(L,\preceq)$. If $x_i=\min S$, then 
\[
\Psi_{{S},\frac{1}{N^\alpha}}(x_i)=\sum_{j=1}^i \frac{\mu_{S}(x_j,x_i)}{x_j^\alpha}=\frac{1}{x_i^\alpha}>0.
\]
If $x_i\neq\min S$, then there is at least one element $x_k$ that is covered by $x_i$ and we obtain
\[
\Psi_{{S},\frac{1}{N^\alpha}}(x_i)=\sum_{j=1}^i \frac{\mu_{S}(x_j,x_i)}{x_j^\alpha}=\underbrace{\frac{1}{x_i^\alpha}-\frac{1}{x_k^\alpha}}_{<0}+\underbrace{\sum_{\substack{x_j\,|\,x_i\\x_j\neq x_i,x_k}} \frac{\mu_{S}(x_j,x_i)}{x_j^\alpha}}_{\leq0}<0.
\]
Thus the matrix $[S]_{N^\alpha}=([x_i,x_j]^\alpha)$ in invertible for all $\alpha>0$ and we have proven Condition 1.
\end{proof}

\section{Discussion about conjectures on singularity of power LCM matrices with real exponent}

What comes to nonsingularity of power GCD and LCM matrices, there are several open conjectures presented by Hong to be found in the literature. At this point we are ready to take a closer look at them. Let us begin with the following two.

\begin{conjecture}[\cite{Ho2}, Conjecture 4.1]\label{conj3}
Let $\alpha\neq0$ and let $S=\{x_1,\ldots,x_n\}$ be an odd-gcd-closed set. Then the matrix $[[x_i,x_j]^\alpha]$ on $S$ is nonsingular.
\end{conjecture}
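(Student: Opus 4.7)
The plan is immediate: Conjecture \ref{conj3} strictly contains Conjecture \ref{conj1} as its $\alpha = 1$ case, and Conjecture \ref{conj1} has already been disproved in Theorem \ref{vastaesim}. Therefore the $9$-element odd GCD closed set
\[
S = \{1,\, 3,\, 5,\, 7,\, 195,\, 291,\, 1407,\, 4025,\, 1020180525\}
\]
constructed there immediately serves as a counterexample to Conjecture \ref{conj3} with $\alpha = 1$. Concretely, Theorem \ref{vastaesim} established $\Psi_{S,\frac{1}{N}}(1020180525) = 0$, and by Proposition \ref{psi-lause} this forces $\det [S]_{N^1} = 0$, so the matrix $[[x_i,x_j]^1]$ is singular on an odd GCD closed set.

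Thus the only writing required is a short corollary-style statement recording this implication: because the usual LCM matrix is a special case of the power LCM matrix, a single singularity at $\alpha = 1$ suffices to kill the conjecture for all nonzero exponents simultaneously. There is no genuine obstacle here; the entire mathematical content was already absorbed into the construction of Theorem \ref{vastaesim}, and the present item is essentially a free byproduct.

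One natural next question, which is \emph{not} required for disproving Conjecture \ref{conj3} but would strengthen the picture, is whether singularity persists for some $\alpha \neq 1$. The natural tool here would be Theorem \ref{0-lause}, which applies whenever an element $x$ covers $m \geq 2$ elements whose common meet $y$ satisfies $\mu_S(y,x) > 0$. For the specific set above, the meet of the four elements covered by $1020180525$ is $1$, and $\mu_S(1,\,1020180525) = -1$, so Theorem \ref{0-lause} does not apply directly at the top element of this $S$. An alternative would be to exploit continuity of $\alpha \mapsto \Psi_{S,\frac{1}{N^\alpha}}(1020180525)$: since this function vanishes at $\alpha = 1$ and is analytic in $\alpha$, one could examine its derivative at $\alpha = 1$ to see whether singularity is isolated or occurs on a larger set — but again, this refinement is not needed to settle Conjecture \ref{conj3} in the negative.
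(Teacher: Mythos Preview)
Your proposal is correct and matches the paper's own approach: the paper likewise observes that Theorem~\ref{vastaesim} already shows Conjecture~\ref{conj3} fails at $\alpha=1$, which is enough to disprove it. The paper does go a bit further than you do, additionally modifying the set to $S=\{1,3,5,7,195,291,1407,4025q,1020180525q\}$ with odd $q>1$ and using a continuity/limit argument on $h_{9,q}(\alpha)$ to produce counterexamples with $\alpha_0>1$; but this is a strengthening, not a different route to the disproof itself.
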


\begin{conjecture}[\cite{Ho2}, Conjecture 4.5]\label{conj4}
Let $\alpha\neq0$ and let $S=\{x_1,\ldots,x_n\}$ be an odd-lcm-closed set. Then the matrix $[[x_i,x_j]^\alpha]$ on $S$ is nonsingular.
\end{conjecture}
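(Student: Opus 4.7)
The plan is to disprove Conjecture~\ref{conj4} by transferring the counterexample of Theorem~\ref{vastaesim} across the natural order-reversing duality that swaps odd LCM-closed sets with odd GCD-closed sets containing $1$.

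First I would set up the duality. Let $S=\{x_1,\ldots,x_n\}$ be an odd LCM-closed set and put $M=\mathrm{lcm}(S)$. Because $S$ is LCM-closed the element $M$ lies in $S$, and hence $M$ is odd. Writing $y_i=M/x_i$, the elementary identity $\gcd(M/a,M/b)=M/\mathrm{lcm}(a,b)$, valid whenever $a,b\mid M$, shows both that $T=\{y_1,\ldots,y_n\}$ is an odd GCD-closed set of positive integers (which automatically contains $1=M/M$) and that $[x_i,x_j]^\alpha=M^\alpha(y_i,y_j)^{-\alpha}$ for every $\alpha$. Pointwise this reads $[S]_{N^\alpha}=M^\alpha\,(T)_{\tfrac{1}{N^\alpha}}$, and therefore
\[
\det[S]_{N^\alpha}=M^{n\alpha}\det(T)_{\tfrac{1}{N^\alpha}}=M^{n\alpha}\prod_{i=1}^{n}\Psi_{T,\tfrac{1}{N^\alpha}}(t_i)
\]
by the determinant formula~\eqref{eq:det}. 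Consequently $[S]_{N^\alpha}$ is singular precisely when $\Psi_{T,1/N^\alpha}(t_i)=0$ for some $t_i\in T$.

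Once this correspondence is in place, the counterexample is immediate. Take $T$ to be the odd GCD-closed $9$-element set $\{1,3,5,7,195,291,1407,4025,1020180525\}$ produced in the proof of Theorem~\ref{vastaesim}, where it is shown that $\Psi_{T,1/N}(1020180525)=0$. Its LCM equals its largest element $M=1020180525$, so the dual set $S=\{M/t : t\in T\}$ is a $9$-element odd LCM-closed set, and the displayed identity with $\alpha=1$ yields $\det[S]=M^{9}\cdot 0=0$. Thus even the usual LCM matrix on this $S$ is singular, which refutes Conjecture~\ref{conj4}.

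There is no real obstacle here: the computational heavy lifting, namely producing a small odd singular example, has already been done in Theorem~\ref{vastaesim}, and the only genuine verification needed is the three-line check that the involution $x\mapsto M/x$ carries odd LCM-closed sets to odd GCD-closed sets and converts LCMs into GCDs of the reciprocals. The sole judgement call is cosmetic: one can state the resulting theorem for the specific $9$-element $S$ just constructed, or more generally observe that odd LCM-closed counterexamples are in bijection with the odd GCD-closed counterexamples studied in Section~4, so that Conjecture~\ref{conj3} and Conjecture~\ref{conj4} fall together as soon as a single odd GCD-closed singular set is exhibited.
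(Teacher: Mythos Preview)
Your argument is correct and essentially identical to the paper's: both exploit the order-reversing involution $x\mapsto M/x$ (with $M$ the common lcm/top element) to pass between odd GCD-closed and odd LCM-closed sets, and both feed in the nine-element odd GCD-closed counterexample of Theorem~\ref{vastaesim} to produce a singular LCM matrix on an odd LCM-closed set. The only cosmetic difference is that the paper writes the duality starting from the GCD-closed side while you start from the LCM-closed side, but the computation and conclusion coincide.
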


First we should note that every counterexample for Conjecture \ref{conj3} generates a counterexample for Conjecture \ref{conj4} (in fact these two conjectures are equivalent to each other). In order to see this we utilize a method similar to that presented in \cite{HS}. Let $S=\{x_1,x_2,\ldots,x_n\}$ be a GCD closed set of odd positive integers such that $x_i\,|\,x_n$ for all $i=1,\ldots,n$. Now let $S'=\{\frac{x_n}{x_1},\frac{x_n}{x_2},\ldots,\frac{x_n}{x_n}\}$. The elements of $S'$ are clearly odd, and since $\gcd(x_i,x_j)\in S$ for all $i,j\in\{1,\ldots,n\}$, we have
\[ 
\textrm{lcm}\left(\frac{x_n}{x_i},\frac{x_n}{x_j}\right)=\frac{x_n^2}{x_ix_j\gcd\left(\frac{x_n}{x_i},\frac{x_n}{x_j}\right)}=\frac{x_n}{\gcd(x_j,x_i)}\in S'
\]
for all $i,j=1,\ldots,n$. Thus the set $S'$ is LCM closed. Furthermore, if $$\det [S]_{N^{\alpha}}=\det[[x_i,x_j]^{\alpha}]=\left(\prod_{k=1}^nx_k^{2\alpha}\right)\det\left[\frac{1}{(x_i,x_j)^\alpha}\right]=0,$$ we have $\det[\frac{1}{(x_i,x_j)^\alpha}]=0$ and therefore
\[
\det [S']_{N^{\alpha}}=\det\left[\left[\frac{x_n}{x_i},\frac{x_n}{x_j}\right]^\alpha\right]=x_n^n\det\left[\frac{1}{(x_i,x_j)^\alpha}\right]=0.
\]

It turns out that the elements of $S$ being odd has very little to do with the nonsingularity of the matrix $[[x_i,x_j]^\alpha]$. It follows already from Theorem \ref{vastaesim} that Conjecture \ref{conj3} does not hold for $\alpha=1$. More counterexamples can be found by using the method presented in the proof of Theorem \ref{0-lause} (the elements of $S$ can easily be chosen to be odd by assuming that $p_i\neq 2$ for all $i=2,\ldots,n$, as done in Example \ref{ex:timantti}). This means that for each semilattice structure $(L,\preceq)$, where $L$ is not a $\wedge$-tree set, there exist infinitely many counterexamples. Another consequence of Example \ref{ex:timantti} is that Theorem 1.5 in \cite{Ho2} cannot be improved; the condition ``$\epsilon<0$ or $\epsilon\geq1$'' cannot be improved to ``$\epsilon\neq0$''.

When applying Theorem \ref{0-lause} in practice the exponent $\alpha_0$ (for which the matrix $[[x_i,x_j]^\alpha]$ is singular) is often located near zero. This leaves open the possibility that Conjecture \ref{conj3} could be true when $\alpha>1$. Unfortunately not even this assumption is enough to salvage Conjecture \ref{conj3}. This can be seen by modifying the counterexample in Theorem \ref{vastaesim}. Let us consider the set
\[
S=\{1, 3, 5, 7, 195, 291, 1407, 4025q, 1020180525q\},
\]
where $q>1$ is an odd number. This set is clearly GCD closed, and thus we may define
\begin{align*}
&h_{9,q}(\alpha)=\Psi_{S,\frac{1}{N^\alpha}}(1020180525q)\\
&=\frac{1}{(1020180525q)^\alpha}-\frac{1}{(4025q)^\alpha}-\frac{1}{1407^\alpha}-\frac{1}{291^\alpha}-\frac{1}{195^\alpha}+\frac{1}{7^\alpha}+\frac{1}{5^\alpha}+\frac{2}{3^\alpha}-1\\
&=\frac{1}{q^\alpha}\underbrace{\left(\frac{1}{1020180525^\alpha}-\frac{1}{4025^\alpha}\right)}_{<0}-\frac{1}{1407^\alpha}-\frac{1}{291^\alpha}-\frac{1}{195^\alpha}+\frac{1}{7^\alpha}+\frac{1}{5^\alpha}+\frac{2}{3^\alpha}-1.
\end{align*}
Now let $\alpha=1$. By Example \ref{vastaesim} we know that if also $q=1$, then $h_{9,q}(1)=0$. But since $q>1$, $\frac{1}{q^\alpha}<1$ and we must have $h_{9,q}(1)>0$. Keeping in mind that $h_{9,q}(\alpha)$ is a continuous function on $\alpha$ and that in this case $\lim_{\alpha\to\infty}h_{9,q}(\alpha)=-1$, we now may conclude that there exists a real number $\alpha_0>1$ such that the matrix $[S]_{N^{\alpha_0}}=[[x_i,x_j]^{\alpha_0}]$ is singular.

In addition to those conjectures that already have been discussed, it turns out that the results found in this article have interesting consequences to some other conjectures found in the literature. Since the function $N^\alpha$ is clearly both completely multiplicative and strictly monotonous, it is easy to see that both of the following two conjectures are in fact false as well.

\begin{conjecture}[\cite{Ho2}, Conjecture 4.3]
Let $S=\{x_1,\ldots,x_n\}$ be an odd-gcd-closed set and $f$ a completely multiplicative function. If $f$ is strictly monotonous function, then the matrix $[f[x_i,x_j]]$ is nonsingular.
\end{conjecture}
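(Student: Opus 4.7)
The plan is to reduce the falsehood of this conjecture to counterexamples that have already been produced earlier in the paper, by exhibiting a single completely multiplicative, strictly monotonous $f$ and a single odd GCD-closed $S$ for which $[f[x_i,x_j]]$ is singular. The natural candidate family for $f$ is $N^\alpha$, where $N^\alpha(m)=m^\alpha$. For any $\alpha>0$ this function is completely multiplicative, since $(mn)^\alpha=m^\alpha n^\alpha$, and strictly monotonous (strictly increasing on $\Zset_+$). Hence $N^\alpha$ satisfies the hypotheses that the conjecture imposes on $f$, and for such an $f$ the matrix $[f[x_i,x_j]]$ is exactly the power LCM matrix $[S]_{N^\alpha}=[[x_i,x_j]^\alpha]$.

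With this reduction in place, the first step is to take $\alpha=1$ and $f=N$, and to use the set $S=\{1,3,5,7,195,291,1407,4025,1020180525\}$ constructed in the proof of Theorem \ref{vastaesim}. That theorem shows $S$ is an odd GCD-closed set for which $\Psi_{S,\frac{1}{N}}(1020180525)=0$, so by Proposition \ref{psi-lause} the matrix $[S]=[[x_i,x_j]]$ is singular. Since $N$ is completely multiplicative and strictly increasing, $(S,N)$ is a direct counterexample to the conjecture.

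For robustness, I would also record that the same idea produces infinitely many counterexamples by invoking Theorem \ref{0-lause}. Given any meet semilattice $(L,\preceq)$ that is not a $\wedge$-tree set, the construction in that theorem builds a GCD-closed $S$ with $(S,|)\cong(L,\preceq)$ and an exponent $\alpha_0>0$ with $[S]_{N^{\alpha_0}}$ singular; one simply chooses the distinguishing primes $p_2,\ldots,p_n$ used in the construction to be odd, as was done in Example \ref{ex:timantti}, so that every element of $S$ is odd. Setting $f=N^{\alpha_0}$ then yields another completely multiplicative, strictly monotonous counterexample.

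The main obstacle here is essentially cosmetic rather than mathematical: one has to check only that the concrete $N^\alpha$ fits the abstract hypotheses of the conjecture on $f$, after which everything follows by quotation of Theorem \ref{vastaesim} (or Theorem \ref{0-lause}). No new determinant computation or lattice-theoretic argument is needed beyond what the preceding sections supply.
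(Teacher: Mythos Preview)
Your proposal is correct and matches the paper's own treatment essentially verbatim: the paper also disproves the conjecture by observing that $N^\alpha$ is completely multiplicative and strictly monotonous, so the odd counterexample of Theorem~\ref{vastaesim} (and, more generally, the odd-prime variant of the construction in Theorem~\ref{0-lause}) already falsifies it. No additional argument is needed.
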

\begin{conjecture}[\cite{Ho2}, Conjecture 4.7]
Let $S=\{x_1,\ldots,x_n\}$ be an odd-lcm-closed set and $f$ a completely multiplicative function. If $f$ is strictly monotonous function, then the matrix $[f[x_i,x_j]]$ is nonsingular.
\end{conjecture}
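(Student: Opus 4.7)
The plan is to disprove this conjecture by exhibiting an explicit counterexample. The key observation is that $f = N$ defined by $N(m) = m$ is itself completely multiplicative and strictly increasing on $\Zset^+$, so it satisfies the hypotheses of the conjecture. Hence producing an odd LCM-closed set whose usual LCM matrix is singular will immediately refute the conjecture.

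First I would invoke Theorem~\ref{vastaesim}, which supplies the $9$-element odd GCD-closed set
\[
S = \{1,\, 3,\, 5,\, 7,\, 195,\, 291,\, 1407,\, 4025,\, 1020180525\}
\]
with $[S]$ singular, and I would note that every element of $S$ divides $x_9 = 1020180525$. I would then apply the transformation $S' = \{x_9/x_1,\, x_9/x_2,\, \ldots,\, x_9/x_9\}$ introduced in the paragraph preceding Conjecture~\ref{conj4}. The set $S'$ consists of nine odd integers, and it is LCM-closed via the identity $\operatorname{lcm}(x_9/x_i,\, x_9/x_j) = x_9/\gcd(x_i,x_j) \in S'$, where membership holds because $\gcd(x_i,x_j) \in S$ by GCD-closedness of $S$.

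Combining the two determinantal factorizations displayed in that same paragraph yields
\[
\det[S']_N \;=\; x_9^{9}\,\det\!\left[\tfrac{1}{(x_i,x_j)}\right] \;=\; 0,
\]
so $[S']$ is singular. Taking $f = N$ in the conjecture, this contradicts the asserted nonsingularity of $[f[x_i,x_j]]$ on an odd LCM-closed set and completes the disproof.

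The main obstacle is not in the short bookkeeping just outlined but rather in the construction of the original singular example $S$, which has already been carried out in the proof of Theorem~\ref{vastaesim}. For additional robustness, Theorem~\ref{0-lause} would allow one to generate further counterexamples with odd elements (by restricting the primes $p_i$ appearing in its proof to be odd, as in Example~\ref{ex:timantti}), establishing the failure of the conjecture not merely for $f = N$ but for $f = N^\alpha$ across a range of exponents $\alpha > 0$, and thus for a genuinely broad class of strictly monotone completely multiplicative functions.
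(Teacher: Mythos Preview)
Your proposal is correct and follows essentially the same route as the paper: the paper dispatches this conjecture in a single sentence by observing that $N^\alpha$ is completely multiplicative and strictly monotonous, so the counterexamples already obtained for Conjecture~\ref{conj4} (via Theorem~\ref{vastaesim} together with the GCD-to-LCM transformation $S\mapsto S'=\{x_n/x_i\}$) immediately refute it. You have simply unpacked that sentence explicitly for $f=N$, which is exactly the intended argument.
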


{\noindent\bf Acknowledgement} The authors wish to thank Kerkko Luosto for his valuable comments during the writing of this article.

\end{document}